\documentclass[11pt,reqno]{amsart}
\usepackage[T1]{fontenc}
\usepackage{lmodern}
\usepackage{amsmath,amssymb,amsthm,mathtools}
\usepackage[margin=1in]{geometry}
\usepackage[expansion=false]{microtype}
\usepackage{enumitem}
\usepackage{needspace}
\usepackage{tikz}
\usetikzlibrary{arrows,positioning,calc}
\tikzset{property/.style={draw,rounded corners=2pt,align=center,
  inner sep=5pt,font=\small},imp/.style={->,>=latex,semithick},
  equiv/.style={<->,>=latex,semithick}}
\newcommand{\step}[1]{\par\smallskip\noindent\emph{#1}\enspace}
\usepackage[colorlinks=true,linkcolor=black,citecolor=black,urlcolor=black]{hyperref}
\hypersetup{pdfauthor={Hwankoo Kim},pdftitle={An overring characterization of Gorenstein Prufer domains}}
\numberwithin{equation}{section}
\newtheorem{theorem}{Theorem}[section]
\newtheorem{proposition}[theorem]{Proposition}
\newtheorem{corollary}[theorem]{Corollary}
\newtheorem{lemma}[theorem]{Lemma}
\theoremstyle{definition}
\newtheorem{definition}[theorem]{Definition}
\newtheorem{example}[theorem]{Example}
\theoremstyle{remark}

\DeclareMathOperator{\Gfd}{Gfd}

\DeclareMathOperator{\fd}{fd}
\DeclareMathOperator{\Tor}{Tor}
\DeclareMathOperator{\Ext}{Ext}
\DeclareMathOperator{\Hom}{Hom}
\DeclareMathOperator{\Frac}{Frac}
\DeclareMathOperator{\Spec}{Spec}

\newcommand{\GF}{\mathcal{GF}}
\newcommand{\wGgldim}{\operatorname{wGgldim}}
\newcommand{\OG}{\mathrm{(O)}}

\setlist[enumerate,1]{label=\textup{(\roman*)},leftmargin=2.4em}
\allowdisplaybreaks[2]
\newtheorem*{maintheorem}{Theorem A}
\title[An overring characterization]{An overring characterization of\protect\\
Gorenstein Pr\"ufer domains}
\author[Kim]{Hwankoo Kim}
\address{Department of Computer Engineering, Hoseo University,
Asan 31499, Republic of Korea}
\email{hkkim@hoseo.edu}
\date{\today}
\subjclass[2020]{13D05, 13F05, 13C11, 13A15}
\keywords{Gorenstein flat module, Gorenstein Pr\"ufer domain, overring,
conductor, IF-ring, constructible topology}
\begin{document}
\raggedbottom
\begin{abstract}
We prove that a commutative integral domain is Gorenstein Pr\"ufer
if and only if every overring is Gorenstein flat over the original
domain. It is enough to test overrings that are finitely generated as
algebras.
The proof combines finite duality with a conductor-quotient argument.
Under the overring hypothesis, the conductor quotient of a proper
overring generated by two elements as a module is an IF-ring, and degree-one duality lifts first
syzygies of finitely presented quotient modules to totally reflexive
modules. This gives a bound of one for the Gorenstein flat dimension
of every module annihilated elementwise by conductor powers. Quadratic
integral elements provide such conductors at nonnormal localizations,
while normal localizations give principal-ideal neighborhoods.
Constructible compactness yields a finite cover, and a bounded complex
of localizations combines these bounds over the original domain.
Consequently every finitely generated ideal is Gorenstein flat and the
domain is coherent. Examples distinguish finite-type from module-finite
tests and Gorenstein flatness from ordinary flatness.
\end{abstract}
\maketitle

\section{Introduction}

Overrings encode the divisibility of an integral domain without enlarging
its quotient field. The classical criterion (\cite[Theorem 6.10]{LM}) that a domain is Pr\"ufer
if and only if all its overrings are flat turns this ring-theoretic
information into a statement about every finitely generated ideal.
Replacing flat modules by Gorenstein flat modules suggests a corresponding
criterion for Gorenstein Pr\"ufer domains. The replacement is not formal:
Gorenstein flat modules need not be flat, and coherence cannot be
assumed when it is part of the desired conclusion.

The terminology comes from the Gorenstein homological theory developed
from totally reflexive modules and complete resolutions; see
\cite{CFH,Holm}. Qiao and Wang \cite[Theorem~4.2]{QW} characterized
Gorenstein Pr\"ufer domains by coherence and divisoriality of finitely
generated ideals. Hu, Lim, and Zhou \cite[Corollary~2.4]{HLZ} proved
that a domain is Gorenstein Pr\"ufer precisely when every torsion-free
module is Gorenstein flat, equivalently when every injective module
has flat dimension at most one. Thus every overring of a Gorenstein
Pr\"ufer domain is Gorenstein flat over the base domain. The converse
is asked in \cite[Section~9, Problem~13]{GKL}.

We prove the converse for arbitrary commutative integral domains.

\begin{maintheorem}
Let $R$ be a commutative integral domain and let $K=\Frac R$.
The following conditions are equivalent: $R$ is Gorenstein Pr\"ufer;
every overring $R\subseteq T\subseteq K$ is Gorenstein flat as an
$R$-module; and every finitely generated $R$-subalgebra of $K$ is
Gorenstein flat as an $R$-module.
\end{maintheorem}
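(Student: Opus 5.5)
The implications (i)$\Rightarrow$(ii)$\Rightarrow$(iii) are immediate. If $R$ is Gorenstein Pr\"ufer, then by \cite[Corollary~2.4]{HLZ} every torsion-free $R$-module is Gorenstein flat. Every overring is torsion-free, and every finitely generated $R$-subalgebra of $K$ is an overring. The substance is (iii)$\Rightarrow$(i). By \cite[Theorem~4.2]{QW} it suffices to show that $R$ is coherent and that every finitely generated ideal is divisorial, or equivalently that every finitely generated ideal is Gorenstein flat, with coherence as a by-product. So my target is a uniform bound $\Gfd_R M\le 1$ for all modules $M$ of the form $R/I$ with $I$ finitely generated. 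That bound makes $I$ Gorenstein flat, and I would then extract coherence from finite duality for finitely presented modules.

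The plan is local-to-global. For a prime $\mathfrak p$, I split into two cases according to whether $R_{\mathfrak p}$ is integrally closed. In the nonnormal case I pick an element $u\in K$ integral over $R_{\mathfrak p}$ but not in it. After clearing denominators, $u$ satisfies a quadratic (or low-degree) relation, so the overring $T=R[u]$ is generated as an $R$-module by $1,u$ near $\mathfrak p$. Let $\mathfrak c=(R:T)$ be its conductor; it is not contained in $\mathfrak p$ locally. Hypothesis (iii) makes $T$ Gorenstein flat. From this I would show that $T/\mathfrak c T$, or rather $R/\mathfrak c$, is an IF-ring: the quotient $T/R$ is presented by two generators, and applying $\Hom_R(-,R)$ gives degree-one duality. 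Using that, first syzygies of finitely presented $R/\mathfrak c^n$-modules lift to totally reflexive $R$-modules. This yields $\Gfd_R M\le 1$ for every $M$ annihilated elementwise by powers of $\mathfrak c$.

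In the normal case, an integrally closed $R_{\mathfrak p}$ together with Gorenstein flatness of all finite-type overrings (e.g.\ $R[a/b]$) should force $R_{\mathfrak p}$ to be a valuation domain. Indeed, flatness-like behaviour of $R[a/b]$ plus normality gives $a/b$ or $b/a$ in $R_{\mathfrak p}$, via Gorenstein flat modules over a local domain with torsion-free localizations. This gives a principal-ideal neighbourhood $D(s)\ni\mathfrak p$ over which finitely generated ideals become principal, so $\Gfd\le 1$ over $R_s$.

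To globalize, I would use the fact that these neighbourhoods, conductor-type sets $V(\mathfrak c)^c$ and $D(s)$, are constructible and cover $\Spec R$. By quasi-compactness of the constructible topology, finitely many of them suffice. I would then form a finite \v{C}ech-type bounded complex of localizations and conductor-torsion pieces resolving $M$, each term of which has $\Gfd\le 1$. An induction over the cover should then bound $\Gfd_R M$ by $1$, since the complex has length matching the overlap structure, with dimension shifting absorbing the extra length. I expect the main obstacle to be this gluing step, together with the nonnormal IF-ring argument. Gorenstein flat dimension is not local in general without coherence, so I would carefully use that the class $\GF$ is closed under extensions, kernels of epimorphisms, and direct limits over arbitrary rings (Šaroch–Šťovíček). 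That lets the bounded complex combine the local bounds without assuming coherence in advance.
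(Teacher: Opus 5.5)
Your architecture matches the paper's: two-generated conductor overrings at nonnormal primes, principal neighbourhoods at normal primes, constructible compactness, a bounded localization complex, and the closure properties of $\GF(R)$. However, the step that makes the nonnormal case work is missing.

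\textbf{The missing quadratic element.} Clearing denominators does not lower the degree of an integral equation. If $u$ satisfies a monic equation of degree $n$ over $R_{\mathfrak p}$, then $tu$ satisfies one of degree $n$ over $R$, and $R[tu]$ needs $n$ module generators in general. A quadratic element outside a nonnormal local domain need not exist at all. Take $D=k+XL[[X]]$ with $[L:k]=3$. Any $z$ with $z^2=a+bz$, $a,b\in D$, lies in $L[[X]]$. Its residue has degree at most two over $k$, hence lies in $k$, so $z\in D$.

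Yet everything downstream requires $T=R+Rz$. Only then is $T/R\cong R/C$, so that $\Ext^1_R(R/C,R)\cong(R:C)/R=T/R\cong R/C$ and the shift $\Ext^{i+1}_R(L,R)\cong\Ext^i_{R/C}(L,R/C)$ of Lemma~\ref{lem:quotient-degree-one} holds. With more generators, $T/R$ is a noncyclic faithful $R/C$-module. The identification on which the IF property and the syzygy lifting rest is then lost.

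Producing a quadratic element is exactly where the hypothesis is consumed (Theorem~\ref{thm:qcs-quadratic-witness}). If $R_{\mathfrak p}$ is not seminormal, take $y\notin R_{\mathfrak p}$ with $y^2,y^3\in R_{\mathfrak p}$. Otherwise the conductor of $R_{\mathfrak p}[y]$ is radical, and localizing at a minimal prime over it gives a local domain $D$ whose maximal ideal $\mathfrak n$ is divisorial and nonprincipal. For $U=(D:\mathfrak n)$, Lemma~\ref{lem:qcs-divisorial-maximal} forces $\dim_k(U/\mathfrak n)=2$. It does so via Lemma~\ref{lem:qcs-rank-one} and by dualizing $0\to\mathfrak n^{\oplus(d-1)}\to D^d\to U\to0$.

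\textbf{The IF step.} You also do not say where the IF property of $R/C$ comes from, and it does not evidently follow from degree-one duality for $T$ alone. Lemma~\ref{lem:qcs-quadratic-if} obtains the double-annihilator identity from the intermediate rings $R+Ix$, one for each finitely generated ideal of $R/C$. Finite generation of $C$ itself comes from $C=T^*\in\mathrm{FP}_\infty(R)$ (Lemma~\ref{lem:qcs-finite-dual}).

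\textbf{The covering is inverted.} Since $u\notin R_{\mathfrak p}$, the conductor \emph{is} contained in $\mathfrak p$. The piece containing $\mathfrak p$ is $V(C)$, which is constructibly open because $C$ is finitely generated. The set $V(C)^c$ misses $\mathfrak p$, and the $C$-torsion bound says nothing there.

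\textbf{The gluing is unspecified.} There is no localization attached to $V(C)$, and a \v{C}ech complex on generators of $C$ computes local cohomology only under extra hypotheses. The paper uses the $f_i$ alone, in the complex $X^\bullet=(R/I)\otimes_R\bigotimes_i[R\to R_{f_i}]$.
\begin{enumerate}
\item The positive terms lie in $\mathcal D_1(R)$, because $0\to R_{f_i}\xrightarrow{a_i}R_{f_i}\to(R/I)_{f_i}\to0$ is a flat resolution over $R$, not merely over $R_{f_i}$.
\item The cohomology is killed by $I$ and by powers of $J=(f_1,\ldots,f_r)$. Since $P^e\subseteq I+J$ for $P=C_1\cdots C_s$, it is killed by powers of $P$. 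This is how the closed pieces enter.
\item A descending kernel argument then gives $R/I\in\mathcal D_1(R)$ directly, because $\mathcal D_1(R)$ itself, not only $\GF(R)$, is closed under extensions and kernels of epimorphisms. No dimension shifting is involved.
\end{enumerate}

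\textbf{Remaining points.} Your normal case is only asserted. The working argument is the exact sequence $0\to R\to R[x]\oplus R[x^{-1}]\to R[x,x^{-1}]\to0$ for integrally closed $R$. Together with $\Tor_1^R(E,R[x,x^{-1}])=0$ for an injective cogenerator $E$, it forces $(a,b)$ to be invertible (Lemma~\ref{lem:qcs-normal}). Finally, coherence comes from $\Tor_2^R(E,R/I)=0$, hence $\fd_RE\le1$, together with the ideal characterization recalled in Section~\ref{sec:qcs-preliminaries}. It does not come from finite duality.
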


The forward implication and the module-theoretic characterizations are
existing results of \cite{HLZ}. The contribution here is the unrestricted
converse and the mechanism that proves it. Finite duality first controls
conductors without coherence. If $T=R+Rx$ is a proper overring, the
Gorenstein flatness of all intermediate rings forces $R/(R:T)$ to be an
IF-ring. We then identify the biduality map of the first $R$-syzygy of a
finitely presented module over this quotient. This produces a complete
resolution and extends the resulting dimension bound to arbitrary
conductor-power torsion modules.

The second contribution is a finite patching argument over a fixed
base ring. At each nonnormal localization we construct a quadratic
integral element and its finitely generated conductor. At normal
localizations, a two-chart tensor calculation makes finitely generated
ideals locally principal. The closed conductor sets and the resulting
basic-open sets form an open cover in the constructible topology.
Compactness selects finitely many sets, and a bounded localization
complex combines the homological bounds. Its cohomology is controlled
by a product of conductors. This avoids an unproved local-to-global
criterion for Gorenstein flat modules.

%Figure~\ref{fig:overring-hierarchy} places the new equivalence beside the
%classical one. All module properties in the figure are taken over $R$.
%Neither one-way implication from the classical row to the Gorenstein
%row has an unrestricted converse; the cusp in Example~\ref{ex:overring-cusp} separates them.
%
%\begin{figure}[htbp]
%\centering
%\begin{tikzpicture}[x=1cm,y=1cm]
%\node[property,text width=4.4cm] (p) at (0,0) {$R$ is Pr\"ufer};
%\node[property,text width=4.4cm] (f) at (6.8,0)
% {Every overring is flat};
%\node[property,text width=4.4cm] (g) at (0,-1.7)
% {$R$ is $G$-Pr\"ufer};
%\node[property,text width=4.4cm] (o) at (6.8,-1.7)
% {Every overring is\par Gorenstein flat};
%\node[property,text width=4.4cm] (c) at (0,-3.5)
% {$R$ is coherent and\par every finitely generated ideal\par is Gorenstein flat};
%\node[property,text width=4.4cm] (ft) at (6.8,-3.5)
% {Every finite-type overring\par is Gorenstein flat};
%\draw[equiv] (p)--node[above,font=\footnotesize]{classical}(f);
%\draw[imp] (p)--(g);
%\draw[imp] (f)--(o);
%\draw[equiv] (g)--node[above,font=\footnotesize]{Theorem~\ref{thm:cp-main}}(o);
%\draw[equiv] (g)--(c);
%\draw[equiv] (o)--(ft);
%\end{tikzpicture}
%\caption{Overring tests and their module-theoretic consequences. For an
%integrally closed domain, all six conditions are equivalent.}
%\label{fig:overring-hierarchy}
%\end{figure}

Finite-type and module-finite extensions must be distinguished.
A finite-type overring is finitely generated as an $R$-algebra;
a module-finite overring is finitely generated as an $R$-module.
The latter tests alone do not suffice, even for a two-dimensional
regular local domain; see Example~\ref{ex:modulefinite-insufficient}.
The two-generated module overrings in the proof are intermediate
objects, not a replacement for the quantifier in Theorem~A.

Section~\ref{sec:qcs-preliminaries} establishes finite duality and produces
quadratic elements at nonnormal localizations.
Section~\ref{sec:qcs-return} treats conductor quotients, their biduality,
and modules annihilated by conductor powers.
Section~\ref{sec:cp-patching} proves finite patching, completes the
overring characterization, and gives the separating examples.

\section{Finite duality and quadratic overrings}
\label{sec:qcs-preliminaries}

This section provides the algebraic inputs for the conductor argument.
We first fix the homological conventions and the finite-generation
properties that are available without coherence. We then use finite
duality to distinguish normal localizations from localizations with
proper quadratic module overrings.

\subsection{Conventions and homological preliminaries}

All rings are commutative with identity, all ring homomorphisms preserve
the identity, and all modules are unital. A domain has $1\ne0$.
Auxiliary quotient rings may have zero divisors. An overring of a domain
$R$ is a ring $T$ with $R\subseteq T\subseteq K=\Frac R$.
We use $M^*=\Hom_R(M,R)$ for the ring dual and
$M^+=\Hom_{\mathbb Z}(M,\mathbb Q/\mathbb Z)$ for the character module.
A fractional ideal is a nonzero $R$-submodule $I\subseteq K$ with
$dI\subseteq R$ for some $0\ne d\in R$; put
$I^{-1}=(R:I)$ and $I_v=(R:(R:I))$. It is \emph{divisorial} when
$I=I_v$. For an overring $T$, the \emph{conductor} is
$(R:T)=\{r\in R:rT\subseteq R\}$; it is an ideal of both rings.

\begin{definition}\label{def:overring-homological}
A module $M$ is \emph{Gorenstein flat} if it is a cycle of an exact
complex of flat modules that remains exact after tensoring with every
injective module. The class of such modules is denoted by $\GF(R)$.
Its associated dimension $\Gfd_R M$ is the least length of a finite
resolution by Gorenstein flat modules, and is infinity when no such
resolution exists. We put $\Gfd_R0=0$.
An $R$-module $M$ is of type $\mathrm{FP}_\infty$, or is
\emph{super finitely presented}, if it admits an exact sequence
\[
 \cdots\longrightarrow F_2\longrightarrow F_1
 \longrightarrow F_0\longrightarrow M\longrightarrow0,
\]
where each $F_i$ is a finitely generated free $R$-module.
Equivalently, one may require each $F_i$ to be finitely generated
projective. We denote the class of these modules by
$\mathrm{FP}_\infty(R)$. A module is \emph{totally reflexive}
if it is a cycle of an exact complex of finitely generated free modules
whose $R$-dual is exact. A domain is \emph{Gorenstein Pr\"ufer} if it is
coherent and $\wGgldim(R):=\sup_M\Gfd_RM\le1$.
\end{definition}

Coherence means that every finitely generated ideal is finitely
presented. The definitions and basic properties of complete resolutions
are recalled in \cite{CFH,Holm}. For a complex of finitely generated
free modules, exactness of the ring dual implies exactness after
$\Hom_R(-,Q)$ for every projective $Q$, by direct sums and direct
summands. It also implies injective tensor exactness by finite free
duality. Thus totally reflexive modules are both Gorenstein projective
and Gorenstein flat.

We write $\OG$ for the condition that every overring of $R$ belongs to
$\GF(R)$. The base ring in this condition is always $R$.
The class $\GF(R)$ is resolving and closed under filtered colimits over
arbitrary rings \cite[Corollary~3.12 in the preprint]{SS}; in particular,
no GF-closed hypothesis needs to be imposed. We use the resulting
cokernel, syzygy, and dimension criteria of
\cite[Theorems~2.3, 2.8, and~2.11]{BennisGF}.

A Gorenstein flat module over a domain is torsion-free, since it embeds
in a flat module. Moreover, $\OG$ localizes. If $R_S$ is a localization,
an injective $R_S$-module restricts to an injective $R$-module, since
$R_S\otimes_R-$ is exact. Localizing a complete flat resolution
therefore preserves both exactness and its injective tensor tests.
Every overring of $R_S$ is also an overring of $R$.

We will use the established criterion that a domain is $G$-Pr\"ufer if
and only if every finitely generated ideal is Gorenstein flat.
Indeed, the forward implication follows from \cite[Corollary~2.4]{HLZ}.
For the converse, the ideal sequences give
$\Tor_2^R(E,R/I)=0$ for every injective $E$ and every finitely generated
ideal $I$. The ideal test for flat dimension gives $\fd_RE\le1$;
\cite[Corollary~2.4]{HLZ} then supplies coherence and the
$G$-Pr\"ufer property. Standard character-duality and purity facts are
used as in \cite{WangKim2024}.

\subsection{Finite factorization and reflexive duals}
Finite factorization and an extension property supply the finite duals
needed for conductors.

\begin{lemma}\label{lem:finite-factorization}
Let $R$ be a commutative integral domain with quotient field $K$.
\begin{enumerate}
\item Every homomorphism from a finitely generated $R$-module to a flat
$R$-module factors through a finitely generated free module.
\item If $N$ is a finitely generated Gorenstein flat $R$-module, then
$\Ext_R^1(N,F)=0$ for every flat $R$-module $F$.
\end{enumerate}
\end{lemma}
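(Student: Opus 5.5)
For (i) we use Lazard's description of flat modules. By the Govorov--Lazard theorem, a flat module $F$ is a filtered colimit $F=\varinjlim_\lambda P_\lambda$ of finitely generated free modules. Let $M$ be finitely generated and $f\colon M\to F$ a homomorphism.

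If $M$ is finitely presented, then $\Hom_R(M,-)$ commutes with filtered colimits, so $f$ factors through some $P_\lambda$. For a general finitely generated $M$ this compactness argument fails, so we exploit the domain hypothesis and proceed in four steps.
\begin{enumerate}
\item Choose a surjection $\pi\colon R^n\to M$ and put $g=f\pi\colon R^n\to F$. Since $R^n$ is finitely presented, $g$ factors as $R^n\xrightarrow{h}P\xrightarrow{u}F$ with $P$ finitely generated free.
\item We must arrange that $h$ kills $\ker\pi$. The composite $u h$ kills $\ker\pi$, but $h$ itself need not. Since $P$ and $F$ are torsion-free, we pass to $K$. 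The map $u\otimes K\colon P\otimes K\to F\otimes K$ is a map of $K$-vector spaces, and its image contains the image of $g$.
\item Replace $P$ by a finitely generated free module through which $h$ factors so that the induced map is injective on the relevant finite-dimensional piece. Concretely, refine $\lambda$ in the colimit. The kernel of $P_\lambda\otimes K\to F\otimes K$ is finite dimensional. Exactness of filtered colimits lets us pass to $\mu\ge\lambda$ where the finitely many vectors $h(x_i)\otimes1$, with $x_i$ generators of $\ker\pi\otimes K$, already vanish. Multiplying by denominators and using torsion-freeness of $P_\mu$, we get $h_\mu(\ker\pi)=0$ on the finitely many generators of $\ker\pi\otimes K$.
\item Since $P_\mu$ is torsion-free, $h_\mu(y)=0$ for all $y\in\ker\pi$. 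Indeed, each such $y$ satisfies $dy\in\sum Rx_i$ up to a nonzero $d\in R$.
\end{enumerate}

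The main obstacle is step (iii). Although $\ker\pi$ need not be finitely generated, $\ker\pi\otimes K$ is a finite-dimensional subspace of $K^n$, so finitely many elements $x_i$ of $\ker\pi$ suffice after clearing denominators. With this, $h_\mu$ descends to $M$, giving the required factorization.

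For (ii), take a short exact sequence $0\to F\to E\to N\to0$ representing a class in $\Ext^1_R(N,F)$; we show it splits. Since $N$ is Gorenstein flat, it embeds into a flat module $G$ with flat cokernel that is again Gorenstein flat. Then $\Ext^1_R(N,F)$ is the cokernel of $\Hom(G,F)\to\Hom(N,F)$, so it suffices to show every $\varphi\colon N\to F$ extends along $\iota\colon N\to G$.

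By (i) applied to the finitely generated module $N$ and the flat module $F$, $\varphi$ factors as $N\xrightarrow{\alpha}R^m\to F$. Thus it suffices to extend maps $N\to R$, that is, to show $\Hom(G,R)\to\Hom(N,R)$ is onto. Apply (i) again to $\iota\colon N\to G$, which factors as $N\xrightarrow{\beta}R^k\to G$. Because $\Hom(-,R)$ is compatible with this factorization, the obstruction reduces to the vanishing of $\Ext^1_R(N,R^k)$-type classes.

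To finish, we use that $\Tor_1^R(E',C)=0$ for injective $E'$ and the Gorenstein flat cokernel $C=G/N$. Taking $E'=K/R$-type character duals, together with the standard duality $\Ext^1_R(N,R)^{\vee}\cong\Tor_1(R^{\vee},N)$ valid for finitely generated $N$ of this form, gives the required vanishing. Alternatively, we may invoke directly that a finitely generated Gorenstein flat module has $\Ext^{\ge1}(N,F)=0$ for flat $F$, as in \cite{BennisGF} combined with the factorization in (i).
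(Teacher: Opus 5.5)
Your part (i) is correct and is essentially the paper's argument. You factor $R^n\to F$ through some $P_\lambda$ and pick finitely many $x_i\in\ker\pi$ spanning $K\otimes_R\ker\pi$. The elements $h(x_i)\in P_\lambda$ map to zero in $F$, so they already vanish in some later $P_\mu$, and torsion-freeness of $P_\mu$ finishes the argument. The detour through $u\otimes K$ is unnecessary.

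Part (ii), however, has a genuine gap, in two places.

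\emph{The reduction computes the wrong group.} From $0\to N\xrightarrow{\iota}G\to C\to0$, the long exact sequence gives $\operatorname{coker}\bigl(\Hom_R(G,F)\to\Hom_R(N,F)\bigr)\cong\ker\bigl(\Ext^1_R(C,F)\to\Ext^1_R(G,F)\bigr)$. By contrast, $\Ext^1_R(N,F)$ sits in $\Ext^1_R(G,F)\to\Ext^1_R(N,F)\to\Ext^2_R(C,F)$, and $\Ext^1_R(G,F)$ need not vanish for flat $G$ (e.g.\ $\Ext^1_{\mathbb Z}(\mathbb Q,\mathbb Z)\ne0$). So extending maps along $\iota$ says nothing about $\Ext^1_R(N,F)$. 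Also, $G/N$ is only Gorenstein flat; were it flat, $N$ would be flat. The correct reduction uses a presentation $0\to L\to R^r\to N\to0$ and requires every $h\colon L\to F$ to extend to $R^r$. Since $L$ need not be finitely generated, part (i) cannot be applied to $h$ to replace $F$ by $R$.

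\emph{The closing duality is unavailable.} The isomorphism $\Ext^1_R(N,R)^+\cong\Tor_1^R(R^+,N)$ requires finitely generated projectives in the low degrees of a resolution of $N$, so at least finite presentation. For a merely finitely generated $N$ over a possibly noncoherent domain, this is exactly what is missing. Your closing ``alternatively'' simply assumes the statement.

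The missing idea is to apply character duality in the direction that holds for every module: $\Ext^1_R(N,F^{++})\cong\Tor_1^R(F^+,N)^+$. This vanishes because $F^+$ is injective ($F$ is flat) and $N$ is Gorenstein flat. Hence the composite $L\xrightarrow{h}F\to F^{++}$ extends to $R^r\to F^{++}$.

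Now choose $\ell_1,\dots,\ell_s\in L$ spanning $K\otimes_RL$, say $\ell_i=(a_{i1},\dots,a_{ir})$. The system $\sum_j a_{ij}y_j=h(\ell_i)$ $(1\le i\le s)$ is a finite linear system with constants in $F$. It is solvable in $F^{++}$, hence solvable in $F$ because $F\to F^{++}$ is pure. Let $g\colon R^r\to F$ send the $j$th basis vector to $y_j$; it agrees with $h$ on each $\ell_i$. For arbitrary $\ell\in L$, some $0\ne d\in R$ has $d\ell\in\sum_iR\ell_i$, so $d\,(g(\ell)-h(\ell))=0$. Torsion-freeness of $F$ then gives $g|_L=h$.

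This is the same finite-rank device you used in (i), with purity of $F\to F^{++}$ playing the role that Lazard's theorem played there.
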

\begin{proof}
\step{(i) Factoring through a finite free module.}Take a presentation $R^r\to N$ with kernel $L$, and choose
$\ell_1,\ldots,\ell_s\in L$ spanning $K\otimes_R L$. Express the flat
target as a filtered colimit of finitely generated free modules by
Lazard's theorem \cite[Tag~058G]{Stacks}. The map from $R^r$ factors at
some stage, and a later stage kills all the $\ell_i$. For every
$\ell\in L$ there is $0\ne d\in R$ with
$d\ell\in\sum_iR\ell_i$. Torsion-freeness of that free stage then kills
$\ell$, so the factorization descends to $N$.

\step{(ii) Extending maps into a flat module.}Write $F^+=\Hom_{\mathbb Z}(F,\mathbb Q/\mathbb Z)$.
Since $F^+$ is injective, character duality gives
\[
 \Ext_R^1(N,F^{++})\cong\Tor_1^R(F^+,N)^+=0.
\]
Let $h:L\to F$ be a map on the kernel of a presentation by a finitely generated free module
$R^r\to N$. Extend $h$ to $R^r\to F^{++}$. Choose finitely many
relations spanning $K\otimes_RL$ as in (i). The corresponding finite
system of linear equations, whose constants are their images under
$h$, has a solution in $F^{++}$. Purity of $F\to F^{++}$ gives a
solution in $F$. The resulting map $R^r\to F$ agrees with $h$ on
these relations. Clearing denominators and using torsion-freeness of
$F$ proves agreement on all of $L$. Hence $\Ext_R^1(N,F)=0$.
\end{proof}

The evaluation map becomes an isomorphism for finitely generated
Gorenstein flat modules, even before coherence is available.

\begin{lemma}\label{lem:qcs-finite-dual}
Let $R$ be a domain and $M$ a finitely generated Gorenstein flat $R$-module.
Then the evaluation map $M\to M^{**}$ is an isomorphism,
$M^*\in\mathrm{FP}_\infty(R)$, and
$\Ext_R^i(M^*,R)=0$ for $i>0$.  Also $\Ext_R^1(M,F)=0$ for every flat
$R$-module $F$.  If in addition $M\in\mathrm{FP}_\infty(R)$, then $M$ and
$M^*$ are totally reflexive.
\end{lemma}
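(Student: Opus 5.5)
The plan is to build a coresolution of $M$ by finitely generated free modules whose cokernels are again finitely generated Gorenstein flat, and then to read off every assertion by dualizing it. The assertion $\Ext_R^1(M,F)=0$ for flat $F$ is exactly Lemma~\ref{lem:finite-factorization}(ii), so it needs no further work.

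\step{Step 1: one step of a finite free coresolution.}Since $M\in\GF(R)$, there is a short exact sequence $0\to M\xrightarrow{g}F^0\to C\to0$ with $F^0$ flat and $C\in\GF(R)$; this is the first step of a complete flat resolution. By Lemma~\ref{lem:finite-factorization}(i), $g$ factors as $g=hf$ with $f\colon M\to P$, $h\colon P\to F^0$ and $P$ finitely generated free. Then $f$ is injective, because $g$ is. Put $N=P/M$, which is finitely generated.

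Form the pushout of $f$ along $g$. Since $g$ factors through $f$, this pushout is isomorphic to $F^0\oplus N$. The pushout diagram then yields a short exact sequence
\[
 0\longrightarrow P\longrightarrow F^0\oplus N\longrightarrow C\longrightarrow0 .
\]
Here $P$ is flat and $C$ is Gorenstein flat. Since $\GF(R)$ is closed under extensions and direct summands (it is resolving and closed under filtered colimits, by \cite{SS,BennisGF}), $N$ is Gorenstein flat. I expect this step to be the main obstacle. The point is that the pushout must split as a direct sum, which needs a careful check of the universal property, and that the closure properties of $\GF(R)$ must hold over an arbitrary ring.

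\step{Step 2: iterate and dualize.}Step~1 applies again to $N$, and so on. This gives an exact complex
\[
 0\to M\to P^0\to P^1\to\cdots
\]
of finitely generated free modules, each of whose cokernels $N^i$ is finitely generated and Gorenstein flat. By Lemma~\ref{lem:finite-factorization}(ii), $\Ext_R^1(N^i,R)=0$ for every $i$. Hence $\Hom_R(-,R)$ keeps each short exact piece exact, and
\[
 \cdots\to(P^1)^*\to(P^0)^*\to M^*\to0
\]
is exact. This is a resolution of $M^*$ by finitely generated free modules, so $M^*\in\mathrm{FP}_\infty(R)$.

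To compute $\Ext_R^i(M^*,R)$, dualize this resolution. Finite free modules are reflexive, so the dual complex is $P^0\to P^1\to\cdots$. That complex is exact in positive degrees, which gives $\Ext_R^i(M^*,R)=0$ for $i>0$.

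For reflexivity, apply left exactness of $\Hom_R(-,R)$ to $(P^1)^*\to(P^0)^*\to M^*\to0$. This gives an exact sequence $0\to M^{**}\to P^0\to P^1$. We also have the exact sequence $0\to M\to P^0\to P^1$, since $N^0$ embeds in $P^1$. The evaluation maps make these two sequences compatible, so $M\to M^{**}$ is an isomorphism.

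\step{Step 3: the $\mathrm{FP}_\infty$ case.}Suppose also that $M\in\mathrm{FP}_\infty(R)$, and take a resolution $\cdots\to Q_1\to Q_0\to M\to0$ by finitely generated free modules. Its syzygies are finitely generated, and they are Gorenstein flat because $\GF(R)$ is resolving. So Lemma~\ref{lem:finite-factorization}(ii) gives $\Ext^1(-,R)=0$ on each syzygy, and therefore $\Ext_R^i(M,R)=0$ for all $i>0$.

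Splice this resolution with the coresolution from Step~2. The result is an exact complex of finitely generated free modules with $M$ as a cycle, and its $R$-dual is exact on both halves. Hence $M$ is totally reflexive. Dualizing the whole complex, and using reflexivity of finite free modules, shows that $M^*$ is a cycle of an exact complex of the same kind. So $M^*$ is totally reflexive as well.
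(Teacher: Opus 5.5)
Your proof is correct. Its architecture matches the paper's: factor the first map of a complete flat resolution through a finite free module by Lemma~\ref{lem:finite-factorization}(i), iterate, dualize using Lemma~\ref{lem:finite-factorization}(ii), and splice in the $\mathrm{FP}_\infty$ case. Two local steps are done differently.

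\textbf{Gorenstein flatness of $N=P/M$.} The paper observes that $E\otimes_RM\to E\otimes_RP$ is injective for every injective $E$, because its composite into $E\otimes_RF^0$ is injective. This gives $\Tor_1^R(E,N)=0$, and the paper then invokes the cokernel criterion \cite[Theorem~2.3]{BennisGF}. Your pushout argument instead needs only closure of $\GF(R)$ under extensions and under kernels of epimorphisms.

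The step you flagged as the main obstacle is routine. The map $(p,y)\mapsto(y+h(p),\,p+M)$ identifies the pushout with $F^0\oplus N$. Moreover, $N$ is the kernel of the split epimorphism $F^0\oplus N\to F^0$, so you do not even need closure under direct summands.

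\textbf{Reflexivity.} The paper uses the domain structure. The surjection $(P^0)^*\to M^*$ embeds $M^{**}$ in $P^0$. A rank computation over $K$ shows that $M^{**}/M$ is torsion, and this quotient embeds in the torsion-free module $N^0$, so it vanishes.

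You instead identify both $M$ and $M^{**}$ with $\ker(P^0\to P^1)$, using exactness of $(P^1)^*\to(P^0)^*\to M^*\to0$. This needs $\Ext^1$-vanishing on both $N^0$ and $N^1$, which you have. Your argument is purely homological and works over any commutative ring. The paper's version needs only the single surjection but relies on torsion-freeness.
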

\begin{proof}
\step{Step 1: a finite free coresolution.}Choose $0\to M\to F_0\to G\to0$ with $F_0$ flat and $G$ Gorenstein flat.
Factor the first map through a finitely generated free module $P$ by
Lemma~\ref{lem:finite-factorization}(i).
The resulting map $M\to P$ is injective.  For every injective $E$, the map
$E\otimes_R M\to E\otimes_R P$ is injective, since its composite into
$E\otimes_R F_0$ is injective.  Hence the cokernel $M_1=P/M$ satisfies
$\Tor_1^R(E,M_1)=0$.  The cokernel criterion
\cite[Theorem~2.3]{BennisGF} makes $M_1$ Gorenstein flat.  Iterating gives
\[
 0\longrightarrow M_j\longrightarrow P_j\longrightarrow M_{j+1}
 \longrightarrow0\quad(j\ge0),\qquad M_0=M,
\]
with all $P_j$ finitely generated free and all $M_j$ finitely generated
Gorenstein flat.

\step{Step 2: finite duals and reflexivity.}By Lemma~\ref{lem:finite-factorization}(ii), dualizing the displayed
short exact sequences gives
\[
 0\longrightarrow M_{j+1}^*\longrightarrow P_j^*
 \longrightarrow M_j^*\longrightarrow0.
\]
In particular every $M_j^*$ is finitely generated, and splicing gives a
resolution by finitely generated free modules of $M^*$.  Dualizing $P_j^*\to M_j^*$
embeds $M_j^{**}$ into $P_j$.  It contains $M_j$ under the evaluation map.
For a finitely generated torsion-free module, clearing denominators in a
fraction-field dual basis shows that evaluation becomes an isomorphism
over $K$.  Here $M_j^*$ is finitely generated as well, so
$M_j^{**}/M_j$ is torsion.  It embeds in the torsion-free module
$P_j/M_j=M_{j+1}$, and is therefore zero.  The dual of the spliced
resolution of $M^*$ is thus the original exact coresolution.  It gives
$\Ext_R^{>0}(M^*,R)=0$.

\step{Step 3: total reflexivity in type $\mathrm{FP}_\infty$.}If $M\in\mathrm{FP}_\infty(R)$, every syzygy in a resolution by finitely generated free modules is finitely generated and Gorenstein flat.  The degree-one
vanishing already proved gives $\Ext_R^{>0}(M,R)=0$.  Splicing this
resolution with the dual of a resolution by finitely generated free modules of $M^*$ gives
a totally acyclic complex of finitely generated free modules.  Its dual treats $M^*$ as well.
For an injective $E$, tensoring such a complex is the same as applying
$\Hom_R(-,E)$ to its exact dual.  Thus the complexes are also complete
flat resolutions.
\end{proof}

The finite factorization extends to the divisorial closure of a
finitely generated fractional submodule.

\begin{lemma}\label{lem:qcs-rank-one}
Let $R$ be a domain with quotient field $K$. If $G\subseteq K$ is a
Gorenstein flat $R$-module and $0\ne N\subseteq G$ is a finitely
generated fractional submodule, then $N_v\subseteq G$.
\end{lemma}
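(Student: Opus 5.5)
Fix $x\in N_v$; the goal is $x\in G$. The plan is to realize $x$ as the image of a generator under a map from the finitely generated module $R+Rx$, and then use the extension property of Lemma~\ref{lem:finite-factorization}(ii) against a flat module into which $G$ embeds.

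\emph{Reduction to a principal case.} Replacing $N$ by $N+Ry$ changes nothing, so consider the finitely generated fractional module $N'=N+Rx\subseteq K$. Since $x\in N_v$, we have $(R:N')=(R:N)$. Consequently every $R$-linear map $N'\to R$, which is multiplication by some $c\in(R:N')$, is determined by its restriction to $N$.

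\emph{Embedding $G$ in a flat module.} Since $G$ is Gorenstein flat, there is an exact sequence $0\to G\to F\to C\to0$ with $F$ flat and $C$ Gorenstein flat. The most useful choice is $F=K$. Because $G\subseteq K$ is torsion-free of rank one, $G\to K\otimes_R G=K$ is injective. I would rather work directly with a flat $F$ and the inclusion $G\hookrightarrow F$.

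\emph{Key step.} Consider the inclusion $\iota:N\to G$ and compose with $G\to F$. By Lemma~\ref{lem:finite-factorization}(i), the map $N\to F$ factors as $N\xrightarrow{\alpha}P\xrightarrow{\beta}F$ with $P=R^m$ finitely generated free. Each coordinate of $\alpha$ is a map $N\to R$, hence multiplication by some $c_i\in(R:N)=(R:N')$. Therefore $\alpha$ extends uniquely to $\alpha':N'\to P$ by the same formulas. The composite $\beta\alpha':N'\to F$ then extends $N\to F$.

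\emph{Identifying the image of $x$.} Clearing denominators and using that $F$ is torsion-free, $\beta\alpha'(x)$ is the element "$x$" of $F$, that is, the image of $x$ under the $K$-linear extension $K=K\otimes N\to K\otimes F$. It remains to show that this element lies in the image of $G$, i.e.\ that $\overline{\beta\alpha'}:N'\to F\to C$ vanishes on $x$. Now $\overline{\beta\alpha'}$ kills $N$, so it factors through $N'/N$, which is a finitely generated torsion module. Moreover $C$ is Gorenstein flat, hence torsion-free, so any map from a torsion module to $C$ is zero. Thus $x$ maps into $G$. Since $G\to F$ is injective and compatible with $K$, we get $x\in G$.

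\emph{Expected obstacle.} The delicate point is the last identification: one must check that the extended map really sends $x$ to the element of $F$ coming from $x\in K$. For this I would tensor with $K$: $K\otimes F$ is torsion-free flat, $F\to K\otimes F$ is injective, and the $K$-linear extension of $N\to F$ is unique. So the value at $x$ is forced, and it lies in $F$ because of the factorization through $P$.

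Note that only Lemma~\ref{lem:finite-factorization}(i) and the torsion-freeness of Gorenstein flat modules are used. The Ext-vanishing of part (ii) is not needed here, although it could replace the torsion argument.
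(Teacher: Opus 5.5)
Your proof is correct and is essentially the paper's argument: embed $G$ in a flat $F$ with Gorenstein flat, hence torsion-free, cokernel, factor $N\to F$ through a finite free module by Lemma~\ref{lem:finite-factorization}(i), and use that the coordinate multipliers lie in $(R:N)=(R:N+Rx)$ to send $x$ into $F$. You then use $dx\in N$ together with torsion-freeness of $F/G$ to place $x$ in $G$. Drop the aside about taking $F=K$, since $K/G$ is torsion and so is not an admissible Gorenstein flat cokernel; likewise, the remark that part~(ii) could replace the torsion step is unsupported, as nothing there is known to be Gorenstein flat. Your argument uses neither.
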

\begin{proof}
Embed $G$ into a flat module $F$ with Gorenstein flat, hence torsion-free,
cokernel.  Factor $N\to F$ through a finitely generated free module by
Lemma~\ref{lem:finite-factorization}(i).  Each coordinate map $N\to R$ is multiplication by an element of
$(R:N)$.  For $x\in N_v$ all these coordinate values lie in $R$, so $x$
belongs to $F$, with the identification on the line $KG\subseteq KF$.
Some $0\ne d\in R$ satisfies $dx\in N$.  Torsion-freeness of $F/G$ then
puts $x$ in $G$.
\end{proof}

\subsection{Normal localizations and quadratic integral elements}

A two-chart exact sequence relates Gorenstein flatness of overrings
to invertibility of two-generated ideals in a normal domain.

\begin{lemma}\label{lem:qcs-normal}
Let $R$ be an integrally closed domain. If every overring of $R$ is
Gorenstein flat as an $R$-module, then $R$ is Pr\"ufer.
\end{lemma}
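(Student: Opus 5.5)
To prove that $R$ is Pr\"ufer, it suffices to show that every two-generated ideal $(a,b)$ with $a,b\neq 0$ is invertible. Indeed, a domain in which every two-generated nonzero ideal is invertible is Pr\"ufer. Put $x=b/a$ and consider the two overrings $T_1=R[x]$ and $T_2=R[x^{-1}]$, together with $T_{12}=R[x,x^{-1}]$. The two-chart sequence is
\[
 0\longrightarrow R'\longrightarrow T_1\oplus T_2\longrightarrow T_{12}\longrightarrow0,
\]
where $R'=T_1\cap T_2$ and the second map is the difference map. Surjectivity holds because $x^m x^{-n}$ is a monomial in either $x$ or $x^{-1}$.

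For an integrally closed domain, $R[x]\cap R[x^{-1}]=R$. This is the classical fact behind Seidenberg's lemma: an element of the intersection is integral over $R$, via the standard argument with the $(x,x^{-1})$ relations. So the sequence is
\[
 0\to R\to T_1\oplus T_2\to T_{12}\to0 .
\]

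The plan is to extract $(R:I)I=R$ for $I=(a,b)$ from the Gorenstein flatness of $T_1,T_2,T_{12}$. First I would observe that $(R:T_1)$ relates to $I$. Since $T_1=\bigcup_n R+Rx+\dots+Rx^n$, Lemma~\ref{lem:qcs-rank-one} applies: for the finitely generated fractional submodule $N_n=R+Rx+\dots+Rx^n\subseteq T_1$, we get $(N_n)_v\subseteq T_1$. Likewise for $T_2$ and $T_{12}$.

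The key step is to test the sequence with $\Hom_R(-,R)$, or more precisely with Lemma~\ref{lem:finite-factorization}(ii) applied to finitely generated pieces. Truncating in degrees, the short exact sequence
\[
 0\to R\to (R+Rx)\oplus(R+Rx^{-1})\to R+Rx+Rx^{-1}\to 0
\]
is exact, by the same intersection argument restricted to degree one. Up to scaling, its right term is $a^{-1}(a^2,ab,b^2)$.

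I would then use Lemma~\ref{lem:qcs-rank-one} with $G=T_{12}$ to show that the divisorial closure of $(1,x,x^{-1})$ lies in $T_{12}$. Together with the normality of $R$, this shows that $I^{-1}I$ is not contained in any maximal ideal. Concretely, suppose $I I^{-1}\subseteq \mathfrak m$. Then the localization at $\mathfrak m$ still satisfies $\OG$, by the localization remark, and is still integrally closed, so we may assume $R$ is local.

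For a local normal domain, pick $u=x$ or $x^{-1}$. We then need to show that one of $x$, $x^{-1}$ lies in $R$. This would make $I$ principal, and principal ideals are invertible.

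The main obstacle is exactly this local claim: if neither $x$ nor $x^{-1}$ lies in $R$, we must contradict the Gorenstein flatness of $T_{12}$ or $T_1$. My first attempt is to apply Lemma~\ref{lem:qcs-finite-dual} to the finitely generated torsion-free module $N=R+Rx$. Gorenstein flatness of $N$ would follow from Lemma~\ref{lem:qcs-rank-one} combined with a cokernel-criterion argument \cite[Theorem~2.3]{BennisGF} on the inclusion $N\subseteq T_1$. Lemma~\ref{lem:qcs-finite-dual} would then make $N$ reflexive, i.e.\ divisorial, with $\Ext^1(N,R)=0$.

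Next, use $\mathfrak m$-locality. Since $x\notin R$ and $x^{-1}\notin R$, normality gives $x^{-1}\in\mathfrak m T$, i.e.\ the Seidenberg alternative. From this one shows $(R:N)=\mathfrak m$-type behaviour: $(R:(1,x))=(R:x)\cap R$, which lies inside $\mathfrak m$. Then $N^*N\subseteq\mathfrak m$, while $\Ext^1(N,R)=0$ applied to the presentation
\[
 0\to L\to R^2\to N\to 0
\]
forces the dual sequence to split locally at the trace. Hence the trace ideal $N^*N$ equals $R$, a contradiction.

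If this trace argument fails, the fallback is a direct diagram chase. I would apply Lemma~\ref{lem:finite-factorization}(i) to the inclusion $R+Rx+Rx^{-1}\to T_{12}\subseteq F$ and read off coordinate functionals whose sum evaluated at $1$ is a unit. That would exhibit elements of $I^{-1}$ with $\sum c_i r_i=1$, so $II^{-1}=R$.
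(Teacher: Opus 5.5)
Your proposal has a genuine gap. You set up the right two-chart sequence $0\to R\to R[x]\oplus R[x^{-1}]\to R[x,x^{-1}]\to0$, but you never give a valid way to get invertibility of $I=(a,b)$ from Gorenstein flatness, and the mechanisms you sketch fail.

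\emph{The first attempt.} Gorenstein flatness of $N=R+Rx\cong I$ is not available. Lemma~\ref{lem:qcs-rank-one} only gives $N_v\subseteq R[x]$. The cokernel criterion of \cite{BennisGF} concerns the cokernel $R[x]/N$, not the submodule $N$. To get $N\in\GF(R)$ from $R[x]\in\GF(R)$ you would need $\Gfd_R(R[x]/N)\le1$ for a torsion module. That is the kind of statement the paper reaches only at the very end, with this lemma as an input.

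Even granting $N\in\GF(R)$, reflexivity plus $\Ext^1_R(N,R)=0$ does not force $N^*N=R$. Take the normal Gorenstein local domain $k[[y_1,y_2,y_3,y_4]]/(y_1y_2-y_3y_4)$. The height-one prime $(y_1,y_3)$ is maximal Cohen--Macaulay, hence totally reflexive, but it needs two generators, so it is not invertible. With $x=y_3/y_1$, every piece of data in your first attempt is present (normal, local, $x,x^{-1}\notin R$, $N$ reflexive with $\Ext^{1}_R(N,R)=0$), yet there is no contradiction. So the overrings themselves must drive the argument, not finite duality for $I$.

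\emph{The fallback.} Every functional on $R+Rx+Rx^{-1}$ is multiplication by an element of $R\cap xR\cap x^{-1}R$. That ideal is proper unless $x$ is a unit of $R$. So the mechanism would force $b/a$ to be a unit, which already fails in a DVR with $x$ a uniformizer. Likewise, your claim that $(R+Rx+Rx^{-1})_v\subseteq R[x,x^{-1}]$ plus normality keeps $II^{-1}$ out of every maximal ideal is stated without any argument.

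\emph{The missing idea.} Tensor the two-chart sequence with an injective module instead of dualizing into $R$.
\begin{enumerate}
\item Since $R[x,x^{-1}]$ is Gorenstein flat, $\Tor_1^R(E,R[x,x^{-1}])=0$ for an injective cogenerator $E$. Hence $E\to(E\otimes_RR[x])\oplus(E\otimes_RR[x^{-1}])$ is injective.
\item Suppose $II^{-1}\ne R$, and choose $0\ne e\in E$ killed by $II^{-1}$.
\item Every relation $ra+sb=0$ has the form $(r,s)=(bc,-ac)$ with $c\in I^{-1}$. So $a\mapsto e,\ b\mapsto 0$ and $a\mapsto 0,\ b\mapsto e$ are well-defined maps $I\to E$.
\item Extending these maps to $R$ gives $e_1,e_2\in E$ with $ae_1=e$, $be_1=0$, $ae_2=0$, $be_2=e$.
\item Then $e\otimes1=ae_2\otimes x=0$ in $E\otimes_RR[x]$, and $e\otimes1=be_1\otimes x^{-1}=0$ in $E\otimes_RR[x^{-1}]$. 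This contradicts the injectivity in the first step.
\end{enumerate}
This is the paper's proof. It uses only $\Tor_1$-vanishing for the single overring $R[x,x^{-1}]$, and needs neither localization nor finite duality.
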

\begin{proof}
For $0\ne a,b\in R$, put $x=b/a$, $S=R[x]$, $T=R[x^{-1}]$, and
$U=R[x,x^{-1}]$.  Integral closedness gives $S\cap T=R$, whereas
$S+T=U$; see \cite[Tag~052J]{Stacks}.  Thus
\[
 0\longrightarrow R\longrightarrow S\oplus T\longrightarrow U
 \longrightarrow0
\]
is exact.  Choose an injective cogenerator $E$ of $R$-modules; thus
$\Hom_R(N,E)\ne0$ for every nonzero $N$. Since $\Tor_1^R(E,U)=0$, the map
$E\to(E\otimes_R S)\oplus(E\otimes_R T)$ is injective.

Let $I=(a,b)R$. The trace ideal of $I$ is $II^{-1}$.
If $II^{-1}\ne R$, choose $0\ne e\in E$ killed by
$II^{-1}$.  Every relation $ra+sb=0$ is $(r,s)=(bc,-ac)$ for
$c\in I^{-1}$.  Consequently the prescriptions
$(a,b)\mapsto(e,0)$ and $(a,b)\mapsto(0,e)$ define maps $I\to E$.
Extending them to $R$ gives $e_1,e_2\in E$ with
$ae_1=e$, $be_1=0$, $ae_2=0$, $be_2=e$.
Then $e\otimes1=be_2\otimes1=ae_2\otimes x=0$ in $E\otimes_R S$,
and similarly it is zero in $E\otimes_R T$.  This contradicts the
injection.  Hence every nonzero two-generated ideal is invertible,
which is the classical Pr\"ufer criterion.
\end{proof}

The inverse of a nonprincipal divisorial maximal ideal yields a
module-finite overring with two module generators.

\begin{lemma}\label{lem:qcs-divisorial-maximal}
Let $(D,\mathfrak n,k)$ be a local domain satisfying $\OG$.  If its nonzero
maximal ideal is divisorial and not principal, then
$U=(D:\mathfrak n)$ is a proper overring, $U=D+Dx$ for some $x$, and
$(D:U)=\mathfrak n$.  In particular, $U$ is module-finite and
$\dim_k(U/\mathfrak n)=2$.
\end{lemma}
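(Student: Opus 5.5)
First I would show that $U=(D:\mathfrak n)$ is proper and that $\mathfrak nU=\mathfrak n$. Since $\mathfrak n$ is divisorial and $\mathfrak n\ne D$, we get $(D:\mathfrak n)\ne D$: if $(D:\mathfrak n)=D$, then $\mathfrak n_v=(D:D)=D$, a contradiction. Thus $U\supsetneq D$. Next, $\mathfrak nU$ is an ideal of $D$ containing $\mathfrak n$. If $\mathfrak nU=D$, then $\mathfrak n$ is invertible, and an invertible ideal of a local domain is principal, contrary to hypothesis. Hence $\mathfrak nU=\mathfrak n$. This gives $U\cdot U\subseteq U$, since $\mathfrak n UU=\mathfrak n U=\mathfrak n\subseteq D$, so $U$ is an overring. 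It also gives $\mathfrak n\subseteq(D:U)$. Conversely, $(D:U)$ is a proper ideal of $D$ because $1\cdot U\not\subseteq D$, so $(D:U)=\mathfrak n$.

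The remaining point is the count $\dim_k(U/\mathfrak n)=2$, which also yields $U=D+Dx$. Here $U/\mathfrak n$ is a $k$-vector space containing $D/\mathfrak n=k$ properly. By Nakayama it would suffice to know that $U$ is finitely generated, but this is not yet available, so I would argue as follows.

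Choose $x\in U\setminus D$ and put $V=D+Dx=D[x]$. It is a ring because $x^2\in U$ and $x\mathfrak n\subseteq\mathfrak n$, so $x$ is integral and in fact $x^2\in D+Dx$. To see this, $x$ acts on $U/\mathfrak n$, and I would use that $U/\mathfrak n$ is a $k$-algebra. Alternatively, and this is the cleaner route, apply $\OG$. The ring $V=D[x]$ is an overring, hence Gorenstein flat. It is finitely generated as a module, since $x$ is integral: $x\mathfrak n\subseteq\mathfrak n$ is a faithful finitely generated module when $\mathfrak n$ is finitely generated. To get finite generation of $\mathfrak n$, I would instead use Lemma~\ref{lem:qcs-finite-dual}, or Lemma~\ref{lem:qcs-rank-one} applied to $G=V$ and $N=D+Dx$. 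This gives $N_v\subseteq V$. Now $(D:N)=(D:V)=\mathfrak n$, because it is proper and contains $\mathfrak n$. Therefore $N_v=(D:\mathfrak n)=U$, and so $U\subseteq V\subseteq U$, i.e.\ $U=D+Dx$.

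Finally, $U/\mathfrak n$ is spanned over $k$ by $1$ and $x$. The two are independent because $x\notin D$, so $\dim_k(U/\mathfrak n)=2$ and $U$ is module-finite. The main obstacle is the step $V=D+Dx$ being a ring, that is, $x^2\in D+Dx$, which is needed to treat $V$ as an overring and invoke $\OG$.

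To avoid that step, I would apply $\OG$ to the overring $D[x]$ directly. This needs no ring structure on $D+Dx$. Take $N=D+Dx$, a finitely generated fractional submodule of the Gorenstein flat module $G=D[x]\subseteq U$. Again $(D:N)=\mathfrak n$, because $x\notin D$ and $x\mathfrak n\subseteq D$. Lemma~\ref{lem:qcs-rank-one} then gives $U=N_v\subseteq D[x]\subseteq U$. Hence $U=N_v$ for the two-generated module $N$, and $N\subseteq U=N_v$.

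To conclude $N=U$, I would use that $U/N$ is killed by $\mathfrak n$, since $\mathfrak nU=\mathfrak n\subseteq N$, and that $U/\mathfrak n$ is a $k$-algebra. If its dimension exceeded $2$, choose $y\in U$ with $1,x,y$ independent modulo $\mathfrak n$ and redo the argument with $N'=D+Dy$. This gives $U=N'_v$, and I expect that comparing these forces a contradiction with the local structure. I anticipate this dimension comparison to be the delicate part and would look for a Nakayama-type argument on $U/\mathfrak n$ combined with the Gorenstein flatness of $U$.
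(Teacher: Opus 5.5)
Your first paragraph is correct. Your final route, applying Lemma~\ref{lem:qcs-rank-one} to $G=D[x]$ and $N=D+Dx$, correctly gives $U=(D+Dx)_v=D[x]$ for every $x\in U\setminus D$, which is exactly how the paper's proof begins. (The earlier claim that $x\mathfrak n\subseteq\mathfrak n$ forces $x^2\in D+Dx$ is unsupported, but you rightly drop it.) The proof then stops. You prove neither that $U$ is module-finite nor that $\dim_k(U/\mathfrak n)=2$. Your last paragraph only hopes that comparing $(D+Dx)_v$ with $(D+Dy)_v$ gives a contradiction, and on that information alone it cannot. Take $D=\mathbb Q+XL[[X]]$ with $L=\mathbb Q(\sqrt[3]{2})$. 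Then $\mathfrak n=XL[[X]]$ is divisorial and not principal, $U=L[[X]]$, and $(D:U)=\mathfrak n$. Also $U=D[y]=(D+Dy)_v$ for every $y\in U\setminus D$, because every element of $L\setminus\mathbb Q$ generates $L$. So everything you derived holds, yet $U$ is module-finite with $\dim_k(U/\mathfrak n)=3$. The same example refutes your remark that ``by Nakayama it would suffice to know that $U$ is finitely generated'': Nakayama only says that $U$ needs exactly $\dim_k(U/\mathfrak n)$ generators.

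The missing ingredient is a second, homological use of $\OG$, applied to $U$ itself.

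First, module-finiteness. Suppose $\alpha=x+\mathfrak n$ were transcendental over $k$. Then $x^2\in U\setminus D$, and your argument gives $U=D[x^2]$, contradicting $k[\alpha]\ne k[\alpha^2]$. So $\alpha$ is algebraic. A monic lift $f\in D[X]$ of a monic relation for $\alpha$ has $f(x)\in\mathfrak n\subseteq D$, and $f(X)-f(x)$ shows that $x$ is integral. Hence $U=D[x]$ is module-finite.

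Second, the dimension count. Put $d=\dim_k(U/\mathfrak n)$ and lift a $k$-basis to $1,t_1,\dots,t_{d-1}$. Since $\mathfrak nU=\mathfrak n$, this gives an exact sequence $0\to\mathfrak n^{\oplus(d-1)}\to D^d\to U\to0$. As $U$ is finitely generated and Gorenstein flat, Lemma~\ref{lem:qcs-finite-dual} gives $\Ext_D^1(U,D)=0$. Together with $\Hom_D(\mathfrak n,D)=U$, dualizing yields a surjection $D^d\to U^{\oplus(d-1)}$. Reducing modulo $\mathfrak n$ gives $d(d-1)\le d$, hence $d=2$ and $U=D+Dx$.

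In the example above this is exactly what fails: $\Ext_D^1(U,D)\ne0$, so $U$ is not Gorenstein flat and $\OG$ does not hold there.
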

\begin{proof}
\step{Step 1: the inverse ring and algebraicity.}The ideal $\mathfrak n(D:\mathfrak n)$ equals either $\mathfrak n$ or
$D$.  The latter would make $\mathfrak n$ invertible, hence principal.
Thus $U=(\mathfrak n:\mathfrak n)$ is a ring.  Divisoriality gives
$(D:U)=\mathfrak n$ and $U\ne D$.

For $x\in U\setminus D$, maximality gives
$(D:D+Dx)=\mathfrak n$, whence $(D+Dx)_v=U$.
Lemma~\ref{lem:qcs-rank-one} applied to the Gorenstein flat overring
$D[x]$ yields $U=D[x]$.  Write $\alpha=x+\mathfrak n$.  If $\alpha$
were transcendental over $k$, then $x^2\notin D$, and the same reasoning
would give $U=D[x^2]$, contradicting $k[\alpha]\ne k[\alpha^2]$.
Thus $\alpha$ is algebraic.  Lifting a monic relation to $f(X)\in D[X]$
gives $f(x)\in\mathfrak n\subset D$; the polynomial $f(X)-f(x)$ shows
that $x$ is integral.  Hence $U$ is module-finite.  Its conductor is
finitely generated by Lemma~\ref{lem:qcs-finite-dual}.

\step{Step 2: the residue dimension.}Put $d=\dim_k(U/\mathfrak n)\ge2$, and lift a basis beginning with $1$
to $1,t_1,\ldots,t_{d-1}\in U$.  Since $\mathfrak nU=\mathfrak n\subset D$,
these elements generate $U$ over $D$, and comparison modulo
$\mathfrak n$ gives an exact sequence
\[
 0\longrightarrow\mathfrak n^{\oplus(d-1)}\longrightarrow D^d
 \longrightarrow U\longrightarrow0.
\]
As $\Ext_D^1(U,D)=0$ and $\Hom_D(\mathfrak n,D)=U$, duality gives
$D^d\twoheadrightarrow U^{\oplus(d-1)}$.  Reduction modulo
$\mathfrak n$ yields $d(d-1)\le d$, so $d=2$.  A lift of the second
basis element now gives $U=D+Dx$.
\end{proof}

\label{sec:qcs-witnesses}

A domain $D$ is \emph{seminormal} if $z^2,z^3\in D$ for
$z\in\Frac D$ implies $z\in D$; this is the standard criterion for
seminormality in a domain \cite[Tag~0EUK]{Stacks}. The following
conductor observation permits passage from a seminormal, nonnormal
localization to a divisorial maximal ideal.

\begin{lemma}\label{lem:qcs-seminormal-conductor}
Let $D$ be a seminormal domain and $U$ an overring.  Then the conductor
$(D:U)$ is a radical ideal of $U$, and hence of $D$.
\end{lemma}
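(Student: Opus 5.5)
The plan is to show that $C=(D:U)$ equals its radical in $U$. Then, since $C$ is an ideal of $D$ as well, and $C=\sqrt{C}^{\,U}\cap D$ equals the radical of $C$ in $D$, it is radical in $D$ too. The point to verify first is that $C$ is an ideal of $U$: for $c\in C$ and $u\in U$ we have $cuU\subseteq cU\subseteq D$, so $cu\in C$. This was recorded in the conventions.

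For the main step, take $u\in U$ with $u^n\in C$ for some $n\ge1$; the goal is $u\in C$, i.e.\ $uw\in D$ for every $w\in U$. Fix $w\in U$ and put $z=uw\in U\subseteq\Frac D$. For every $m\ge n$ we have $z^m=u^n(u^{m-n}w^m)$ with $u^{m-n}w^m\in U$. Since $u^n\in C$, this gives $z^m\in C\subseteq D$ for all $m\ge n$. In particular $z^m$ lies in $D$ for all sufficiently large $m$.

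It remains to descend from large powers to $z$ itself using seminormality. Let $k\ge1$ be minimal with $z^m\in D$ for all $m\ge k$. If $k\ge2$, put $y=z^{k-1}$. Then $y^2=z^{2k-2}$ and $y^3=z^{3k-3}$, and both exponents are at least $k$ because $k\ge2$. So $y^2,y^3\in D$, and seminormality gives $y=z^{k-1}\in D$, so $z^m\in D$ for all $m\ge k-1$, contradicting minimality. Hence $k=1$, so $z=uw\in D$. As $w\in U$ was arbitrary, $uU\subseteq D$, i.e.\ $u\in C$.

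Radicality in $D$ now follows. If $r\in D$ with $r^n\in C$, then $r\in U$, so the previous step gives $r\in C$. I expect no serious obstacle. The only care needed is in the descent step: one must use the powers of $uw$ rather than of $u$ alone, so that the conclusion is $uU\subseteq D$ and not merely $u\in D$.
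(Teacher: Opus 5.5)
Your proof is correct and follows the paper's argument step for step. The paper also shows $(zu)^j\in(D:U)\subseteq D$ for all $j\ge n$, and it descends to $zu\in D$ by the same least-threshold device: it applies seminormality to $z^{n-1}$, whose square and cube already lie in $D$.
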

\begin{proof}
Seminormality means that $z^2,z^3\in D$, $z\in\Frac D$, imply $z\in D$.
It also implies that if all sufficiently large powers of $z$ belong to
$D$, then $z\in D$.  Indeed, the least such threshold $n>1$ would put
both $(z^{n-1})^2$ and $(z^{n-1})^3$ in $D$, lowering the threshold.
For $B=(D:U)$, let $z\in U$ satisfy $z^n\in B$.  For every $u\in U$ and
$j\ge n$, $(zu)^j=z^n z^{j-n}u^j\in B\subseteq D$.  Thus $zu\in D$ for
all $u$, which means $z\in B$.
\end{proof}

The preceding inverse-ring calculation produces a quadratic witness at
every nonnormal localization, not only at minimal primes of a fixed ideal.

\begin{theorem}\label{thm:qcs-quadratic-witness}
Let $R$ be a domain satisfying $\OG$, and let $\mathfrak p\in\Spec R$.
If $R_{\mathfrak p}$ is not integrally closed, there is $z\in\Frac R$
such that
\[
 z\notin R_{\mathfrak p},\qquad z^2=u+vz\quad(u,v\in R).
\]
In particular $T=R+Rz$ is a proper module-finite overring, its conductor
$C=(R:T)$ is nonzero and finitely generated, and $C\subseteq\mathfrak p$.
\end{theorem}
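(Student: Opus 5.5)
The plan is to work in the local domain $D=R_{\mathfrak p}$, which satisfies $\OG$ because $\OG$ localizes. First we produce an element $z\in\Frac R$ with $z\notin D$ and $z^2=u+vz$ for some $u,v\in D$. Then we clear denominators. If $s\in R\setminus\mathfrak p$ satisfies $s^2u,\,sv\in R$, then $z'=sz$ satisfies
\[
 z'^2=s^2u+(sv)z'
\]
with coefficients in $R$, and $z'\notin R_{\mathfrak p}$ since $s$ is a unit there. We split into two cases according to whether $D$ is seminormal.

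\step{Case 1: $D$ is not seminormal.}There is $z\in\Frac D$ with $z^2,z^3\in D$ and $z\notin D$. Then $z^2=u+0\cdot z$ with $u\in D$, and we are done.

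\step{Case 2: $D$ is seminormal but not integrally closed.}Pick $w$ integral over $D$ with $w\notin D$, and put $U=D[w]$. This is a proper module-finite overring, so $B=(D:U)$ is nonzero. By Lemma~\ref{lem:qcs-seminormal-conductor}, $B$ is a radical ideal of $D$. Let $\mathfrak q$ be a prime of $D$ minimal over $B$, and pass to $D'=D_{\mathfrak q}$ and $U'=U_{\mathfrak q}$. Since $U$ is a finitely generated $D$-module, the conductor localizes, so $(D':U')=B_{\mathfrak q}$. This ideal is radical with unique minimal prime $\mathfrak qD'$, hence equals the maximal ideal $\mathfrak n=\mathfrak qD'$.

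\begin{enumerate}
\item Here $U'\ne D'$, since the conductor is proper.
\item A conductor $(D':U')=(D':U')_v$ is divisorial, because it has the form $(D':J)$.
\item $\mathfrak n$ is not principal. Otherwise $\mathfrak n$ would be invertible, and $\mathfrak nU'=\mathfrak n$ would give $U'\subseteq(\mathfrak n:\mathfrak n)=D'$.
\end{enumerate}

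Since $\OG$ passes to $D'$, Lemma~\ref{lem:qcs-divisorial-maximal} gives $(D':\mathfrak n)=D'+D'x$ with $x\notin D'$. This module is a ring, so $x^2=a+bx$ with $a,b\in D'$. Because $\mathfrak q\subseteq\mathfrak pR_{\mathfrak p}$, we have $D\subseteq D'$, so $x\notin D$. Clearing denominators, now with $s\in R$ chosen outside $\mathfrak q\cap R\subseteq\mathfrak p$, gives the required $z$.

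\step{The remaining assertions.}For $T=R+Rz$, the relation $z^2=u+vz$ makes $T$ a ring and a two-generated $R$-module. It is proper since $z\notin R$. Its conductor $C$ is nonzero: writing $z=r/d$, we get $dT\subseteq R$. By $\OG$, $T$ is a finitely generated Gorenstein flat module. Moreover $C\cong\Hom_R(T,R)=T^*$, via multiplication maps, since $T\subseteq K$ has rank one. By Lemma~\ref{lem:qcs-finite-dual}, $T^*$ is finitely generated. Finally, if some $c\in C$ lay outside $\mathfrak p$, then $z=(cz)/c\in R_{\mathfrak p}$, a contradiction. Hence $C\subseteq\mathfrak p$.

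The main obstacle is Case~2, namely arranging that the maximal ideal of a further localization is itself the conductor. The seminormal radical-conductor lemma, together with localizing at a minimal prime of the conductor of a module-finite overring, is what makes that step work.
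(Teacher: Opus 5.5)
Your proposal is correct and follows the paper's proof essentially step for step. You split on seminormality, localize the radical conductor of $D[w]$ at a minimal prime so that it becomes a divisorial nonprincipal maximal ideal, use Lemma~\ref{lem:qcs-divisorial-maximal} to get the quadratic element, and use finite duality for $C\cong T^*$. The only cosmetic difference is that you clear denominators in one step, directly from $D_{\mathfrak q}=R_{\mathfrak q\cap R}$ to $R$, whereas the paper first passes to $D$ and then to $R$.
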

\begin{proof}
\step{Step 1: a local quadratic witness.}Work over the local domain $D=R_{\mathfrak p}$, which satisfies
$\OG$.  If $D$ is not seminormal, choose $y\notin D$ with
$y^2,y^3\in D$.  Then $D+Dy$ is already a quadratic module overring.

Suppose instead that $D$ is seminormal but not normal.  Choose an
integral $y\notin D$ and put $U=D[y]$.  The conductor $B=(D:U)$ is nonzero
and finitely generated by Lemma~\ref{lem:qcs-finite-dual}; it is radical
by Lemma~\ref{lem:qcs-seminormal-conductor}.  Let $\mathfrak q$ be a
minimal prime over $B$ and set $D'=D_{\mathfrak q}$, $U'=U_{\mathfrak q}$.
Because $U$ is module-finite, conductors commute with this localization.
Hence
\[
 (D':U')=B_{\mathfrak q}=\mathfrak qD'=\mathfrak n.
\]
This is a nonzero proper ideal, so $U'\ne D'$.  It is divisorial, being
an intersection of principal fractional ideals.  It cannot be principal:
if it were, its $D'$-endomorphism ring would be $D'$, whereas it contains
$U'$ because $\mathfrak n$ is a $U'$-ideal.

Lemma~\ref{lem:qcs-divisorial-maximal} applied to $D'$ gives a proper
overring $D'+D'w$ and a relation $w^2=b+cw$ with $b,c\in D'$.
Choose $s\in D\setminus\mathfrak q$ with $sb,sc\in D$.
Then $(sw)^2=s^2b+(sc)(sw)$ has coefficients in $D$, and
$sw\notin D'$; in particular $sw\notin D$.  Thus in either case a
quadratic element outside $D$ has been found.

\step{Step 2: descent of the coefficients to $R$.}Clear the two remaining coefficient denominators from $D=R_{\mathfrak p}$:
if $w_0^2=b_0+c_0w_0$ over $D$ and $w_0\notin D$, choose
$t\in R\setminus\mathfrak p$ with $tb_0,tc_0\in R$, and put $z=tw_0$.
Then $z\notin D$ and $z^2=t^2b_0+(tc_0)z$ is a relation over $R$.
It yields $T=R+Rz$.  Its conductor is finitely generated by finite GF
duality.  If $c\in C\setminus\mathfrak p$, then $cz\in R$ would give
$z\in R_{\mathfrak p}$, a contradiction.  Thus $C\subseteq\mathfrak p$.
\end{proof}

\section{Conductor quotients and syzygy lifting}
\label{sec:qcs-return}

A proper two-generated module overring has a conductor quotient with
a normalized degree-one duality. We first obtain the IF property from
intermediate rings, then verify the biduality map of a first syzygy.
The resulting complete resolution gives a bound over $R$ for every
module annihilated elementwise by conductor powers.

\subsection{Closure properties and the IF conductor quotient}

For a commutative ring $A$, a module $E$ is \emph{FP-injective} if
$\Ext_A^1(F,E)=0$ for every finitely presented module $F$.
The ring $A$ is \emph{self-FP-injective} when this holds for $E=A$;
it is an \emph{IF-ring} when every injective $A$-module is flat.
For commutative rings, being IF is equivalent to being coherent and
self-FP-injective \cite[Theorem~2.6]{MTY}. An ideal $C$ annihilates a
module elementwise by powers when, for every element $n$, some integer
$q\ge1$ satisfies $C^qn=0$. The integer may depend on $n$.

Put $\mathcal D_1(R)=\{M:\Gfd_R M\le1\}$.
The zero module is included.  We first specify the closure facts used to
pass from finitely presented quotients to arbitrary torsion modules.

\begin{lemma}\label{lem:qcs-done}
For every commutative ring $R$, the class $\mathcal D_1(R)$ is closed under
extensions, kernels of epimorphisms between its members, direct summands,
and filtered colimits.  If $M\in\mathcal D_1(R)$ and $f\in R$, then
$M_f\in\mathcal D_1(R)$, with the dimension still taken over $R$.
Also, an ideal $I$ is Gorenstein flat if and only if
$R/I\in\mathcal D_1(R)$.
\end{lemma}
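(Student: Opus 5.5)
We will derive every assertion from the basic structure theory of Gorenstein flat modules over an arbitrary ring: $\GF(R)$ is resolving, closed under filtered colimits \cite{SS}, and by \cite[Theorems~2.3, 2.8, 2.11]{BennisGF} it satisfies the syzygy criterion. The syzygy criterion says that $\Gfd_RM\le1$ if and only if, in some (equivalently every) short exact sequence $0\to K\to F\to M\to0$ with $F$ flat, the kernel $K$ is Gorenstein flat. We will also use that $\GF(R)$ is closed under direct summands, since a resolving class closed under countable direct sums (a special case of filtered colimits) is closed under summands by the Eilenberg swindle.

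\emph{Extensions, kernels, summands.} For an extension $0\to M'\to M\to M''\to0$ with $M',M''\in\mathcal D_1(R)$, we choose flat presentations of $M'$ and $M''$. The horseshoe lemma then gives a flat presentation of $M$ whose kernel is an extension of the two Gorenstein flat kernels, so it is Gorenstein flat. For an epimorphism $M\to M''$ with kernel $M'$ and $M,M''\in\mathcal D_1(R)$, we apply the standard dimension inequality $\Gfd M'\le\max(\Gfd M,\Gfd M''-1)$, which holds because $\GF(R)$ is resolving \cite[Theorem~2.11]{BennisGF}. Summands follow by the same argument: the syzygy of $M\oplus M'$ is the sum of the two syzygies, and $\GF$ is closed under summands.

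\emph{Filtered colimits.} This is the step we expect to need most care, because flat presentations do not vary functorially. To fix this, for each module we take the canonical presentation $0\to K(M)\to R^{(M)}\to M\to0$, which is functorial in $M$. Since filtered colimits are exact, $\varinjlim K(M_\lambda)$ is the kernel of the flat presentation $\varinjlim R^{(M_\lambda)}\to\varinjlim M_\lambda$, where the middle term is flat by Lazard. That kernel is a filtered colimit of Gorenstein flat modules, hence Gorenstein flat by \cite{SS}, and the syzygy criterion gives $\Gfd\le1$. Localization $M_f$ is the colimit of the directed system $M\xrightarrow{f}M\xrightarrow{f}\cdots$, so it is covered by this case.

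\emph{The ideal criterion.} The sequence $0\to I\to R\to R/I\to0$ has $R$ free. By the syzygy criterion, $R/I\in\mathcal D_1(R)$ if and only if $I\in\GF(R)$.
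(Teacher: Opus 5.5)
Your proposal is correct and follows essentially the same route as the paper. Extension, kernel and summand closure come from the resolving and filtered-colimit closure of $\GF(R)$ \cite{SS} together with Bennis's syzygy and dimension criteria. Filtered colimits, and hence $M_f$ as a sequential colimit, are handled through the functorial free presentation $R^{(M)}\to M$ and exactness of filtered colimits, exactly as in the paper.

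One small precision concerns the horseshoe step. There you should take free (or projective) presentations of $M'$ and $M''$, because lifting $F''\to M''$ through $M\to M''$ requires projectivity. This costs nothing, since your syzygy criterion accepts any flat presentation.
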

\begin{proof}
The extension, kernel, summand, and syzygy statements follow from
\cite[Theorems~2.8 and~2.11]{BennisGF} and the general closure theorem
\cite{SS}.  For filtered colimits, use the functorial
presentation by the free module on the underlying set of $M$.
Its kernel $\Omega(M)$ is Gorenstein flat precisely when
$M\in\mathcal D_1(R)$.
The underlying-set functor and the free-module functor commute with
filtered colimits. Exactness of filtered colimits then gives
$\Omega(\varinjlim M_\lambda)\cong\varinjlim\Omega(M_\lambda)$.  Apply the direct-limit
closure of Gorenstein flat modules.  Finally $M_f$ is the sequential
colimit of copies of $M$ under multiplication by $f$.  
\end{proof}

Intermediate rings determine annihilators in the conductor quotient.
The double-annihilator identity will give both coherence and
self-FP-injectivity of that quotient.

\begin{lemma}\label{lem:qcs-quadratic-if}
Let $R$ be a domain satisfying $\OG$, and let
$R\subsetneq T=R+Rx\subseteq\Frac R$ be an overring.  Put $C=(R:T)$ and
$A=R/C$.  Then $T$ and $C$ are totally reflexive $R$-modules, and $A$ is an
IF-ring.  In particular $A$ is coherent and self-FP-injective.
\end{lemma}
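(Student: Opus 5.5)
Since $T=R+Rx$ is a Gorenstein flat overring that is finitely generated as an $R$-module, Lemma~\ref{lem:qcs-finite-dual} already gives that $T\to T^{**}$ is an isomorphism and that $T^*\in\mathrm{FP}_\infty(R)$. For a fractional ideal the dual is the colon, so $T^*=(R:T)=C$. Hence $C$ is of type $\mathrm{FP}_\infty$, and
\[
 \Ext^{>0}_R(C,R)=\Ext^{>0}_R(T^*,R)=0 .
\]

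\emph{Total reflexivity.} I first show that $T$ itself is $\mathrm{FP}_\infty$. The presentation $0\to L\to R^2\to T\to0$, $(r,s)\mapsto r+sx$, has kernel $L\cong\{s\in R: sx\in R\}=(R:_R x)$. Since $x^2=u+vx$ with $u,v\in R$, this colon equals $(R:T)=C$, which is finitely generated. So $0\to C\to R^2\to T\to 0$. If $x$ is not known to satisfy a quadratic relation, I instead take $T$ to be a ring containing $x^2$, and obtain the relation from $T$ being module-finite on $1,x$. Now $C$ is $\mathrm{FP}_\infty$, so $T$ is $\mathrm{FP}_\infty$, and the last clause of Lemma~\ref{lem:qcs-finite-dual} makes $T$ and $T^*=C$ totally reflexive.

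\emph{Double annihilators in $A$.} For an ideal $J$ with $C\subseteq J\subseteq R$, I aim to prove
\[
 \operatorname{Ann}_A\operatorname{Ann}_A(J/C)=J/C ,
\]
and also the corresponding identity for finitely generated submodules of $A^n$. Note $\operatorname{Ann}_A(J/C)=(C:J)/C$, so I need the colon calculus $(C:(C:J))=J$ within $R$. Here intermediate rings enter. For $J\supseteq C$, I consider $T_J=R+(C:J)T\cdot$ or, more simply, rings of the form $R+JC^{-1}$-type sublattices of $T$. Every $R$-submodule between $R$ and $T$ is of the form $R+Jx$, and since $Jx\cdot Jx\subseteq J(u+vx)$, each such module is a ring. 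These rings are overrings, hence Gorenstein flat. When finitely generated, they are reflexive and have finitely generated duals by Lemma~\ref{lem:qcs-finite-dual}; in general Lemma~\ref{lem:qcs-rank-one} shows they are closed under $N\mapsto N_v$ for finitely generated $N$. The reflexivity $(R+Jx)_v=R+Jx$ translates, via $(R:R+Jx)=\{r\in R: rJx\subseteq R\}=(C:J)$ and the ring structure of $T$, into $(C:(C:J))=J$. This yields the double annihilator property for ideals of $A$ (first finitely generated, then arbitrary via Lemma~\ref{lem:qcs-rank-one}).

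\emph{IF property.} For a commutative ring, the double annihilator conditions on finitely generated ideals, together with finite generation of annihilators of finitely generated ideals, give self-FP-injectivity and coherence; \cite[Theorem~2.6]{MTY} then yields IF. Finite generation of $(C:J)/C$ comes from $R+Jx$ being finitely generated Gorenstein flat, whose dual is finitely generated. Alternatively, there is a more homological route. Apply $\Hom_R(-,R)$ to $0\to C\to R\to A\to 0$ and use $\Ext^{>0}_R(C,R)=0$. For a finitely presented $A$-module $F$, the change-of-rings isomorphism $\Ext^1_A(F,A)\hookrightarrow\Ext^2_R(F,R)$ makes sense, given $\Hom_R(A,R)=0$ and $\Ext^1_R(A,R)\cong C^*/R=T/R\cong A$. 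This identifies $A$ with $\Ext^1_R(A,R)$, and I would try to show that $\Ext^1_R(-,R)$ restricted to $A$-modules behaves like a duality into $A$.

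\emph{Main obstacle.} The hard step is passing from the ideal-level double annihilator statements to FP-injectivity for all finitely presented $A$-modules, that is, handling submodules of $A^n$ rather than only ideals. For this I would use the rings $R+Nx\subseteq T^n$-analogues: a finitely generated $R$-module between $R^n$ and $T^n$, built from matrices. Their Gorenstein flatness comes from $T^n$ and Lemma~\ref{lem:qcs-finite-dual} applied to $T\otimes$-type modules, which reproduces the double annihilator condition in $A^n$.
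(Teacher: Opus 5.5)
Your first two steps are the paper's argument. $C=T^*$ is $\mathrm{FP}_\infty$ by Lemma~\ref{lem:qcs-finite-dual}, and the sequence $0\to C\to R^2\to T\to0$ makes $T$ of type $\mathrm{FP}_\infty$ and both modules totally reflexive. For finitely generated $J\supseteq C$, the module-finite ring $R+Jx$ has dual $(C:_RJ)$. Finite generation and reflexivity of this ring then give finitely generated annihilators and $\operatorname{ann}_A\operatorname{ann}_A(J/C)=J/C$.

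\textbf{The gap is the last step.} Your IF paragraph claims that finitely generated annihilators plus the double-annihilator identity for finitely generated ideals give coherence and self-FP-injectivity. That implication is true, but it is the remaining substantive content of the lemma, and you neither prove nor cite it. Your \emph{main obstacle} paragraph confirms you do not have it for submodules of $A^n$. The workaround proposed there is not available. $\OG$ says nothing about $R$-modules $M$ with $R^n\subseteq M\subseteq T^n$ that are not overrings. Gorenstein flatness of $T^n$ does not pass to such submodules, so Lemma~\ref{lem:qcs-finite-dual} cannot be applied to them. The homological alternative only restates the goal: $\Ext^1_A(F,A)=0$ for finitely presented $F$ is exactly FP-injectivity.

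No further homological input is needed; what remains is ring theory in $A$.
\begin{enumerate}
\item Coherence: element annihilators are finitely generated, and for finitely generated $J,L$ the intersection $J\cap L=\operatorname{ann}_A(\operatorname{ann}_AJ+\operatorname{ann}_AL)$ is finitely generated.
\item Maps from finitely generated ideals extend, by induction on generators. Given $g$ on $J+bA$, subtract an extension of $g|_J$. The new value $h=g(b)$ is killed by $(J:b)=\operatorname{ann}_A(b\operatorname{ann}_AJ)$. Hence $h\in\operatorname{ann}_A\operatorname{ann}_A(b\operatorname{ann}_AJ)=b\operatorname{ann}_AJ$, say $h=bd$ with $dJ=0$. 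Multiplication by $d$ then extends the remainder.
\item For a finitely generated $L\subseteq A^n$, project onto the last coordinate. The image is a finitely generated ideal and the kernel is finitely generated by coherence, so induction on $n$ reduces to the ideal case.
\end{enumerate}

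Separately, drop the claim that Lemma~\ref{lem:qcs-rank-one} yields the double-annihilator identity for arbitrary ideals. That lemma only returns the finitely generated case. The identity can fail for non-finitely-generated ideals of IF-rings: take $J=\bigoplus_{\mathbb N}k$ in $\prod_{\mathbb N}k$. It is not needed anyway.
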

\begin{proof}
\step{Step 1: the dual modules.}We have $C=T^*$, and the finite duality in
Lemma~\ref{lem:qcs-finite-dual} gives $C\in\mathrm{FP}_\infty(R)$.
The exact sequence
\[
 0\longrightarrow C\xrightarrow{c\mapsto(-xc,c)}R^2
 \xrightarrow{(a,b)\mapsto a+bx}T\longrightarrow0
\]
therefore puts $T$ in $\mathrm{FP}_\infty(R)$.  It and its dual $C$ are
totally reflexive.  Also $T/R\cong A$ by $r+C\mapsto rx+R$.

\step{Step 2: intermediate rings and annihilators.}Let $J$ be a finitely generated ideal of $A$ and $I$ its inverse image in
$R$.  The module $S=R+Ix$ is a ring: write $x^2=u+vx$ with $u,v\in R$ and
multiply its elements.  Since $C$ and $J$ are finitely generated, $S$ is
module-finite, hence Gorenstein flat by $\OG$.  Put $D=(R:S)$.  Duality
gives that $D$ is finitely generated and $S=(R:D)$.  Direct calculation
shows
\[
 D/C=\operatorname{ann}_A J,\qquad
 J=\operatorname{ann}_A\operatorname{ann}_A J.
\]
For the second equality, for $r\in R$ one has
$rD\subseteq C$ if and only if $rx\in(R:D)=S$, if and only if $r\in I$.
Thus annihilators of finitely generated ideals of $A$ are finitely
generated and satisfy the double-annihilator identity.

\step{Step 3: coherence and self-FP-injectivity.}These statements imply coherence: for finitely generated $J,L$,
\[
 J\cap L=\operatorname{ann}_A
       (\operatorname{ann}_A J+\operatorname{ann}_A L)
\]
is finitely generated, and annihilators of single elements are finitely
generated.  They also give self-FP-injectivity.  To extend a map from
$J+bA$ to $A$, first extend its restriction to $J$ by induction on the
number of generators and subtract that multiplication map.  Its value
$h$ at $b$ is then annihilated by $(J:b)$.  Since
$b\operatorname{ann}_A J$ is finitely generated,
\[
 \operatorname{ann}_A(J:b)
 =\operatorname{ann}_A\operatorname{ann}_A
       (b\operatorname{ann}_A J)
 =b\operatorname{ann}_A J.
\]
Choose $d\in\operatorname{ann}_A J$ with $bd=h$; multiplication by $d$
extends the remaining map.  The base case is the double-annihilator
identity for a principal ideal.  Hence every map from a finitely
generated ideal to $A$ extends to $A$.  To pass from ideals to a finitely generated submodule $L\subseteq A^n$,
project onto the last coordinate. Its image $J$ is finitely generated,
and its kernel is finitely generated by coherence. Extend a given map
on this kernel to $A^{n-1}$ by induction, subtract that extension, and
factor the remaining map through $J$. The ideal case then extends it
to the last copy of $A$. Hence maps from all finitely generated
submodules of finite free modules extend. Thus $A$ is self-FP-injective.  The equivalence with the IF property is
\cite[Theorem~2.6]{MTY}.
\end{proof}

\subsection{Degree-one duality and the first syzygy}

The following comparison fixes the extension class and the normalization
of the connecting maps used in the syzygy calculation.

\begin{lemma}\label{lem:quotient-degree-one}
Let $R$ be a domain and let $T=R+Rx\subseteq\Frac R$ be a proper
overring that is Gorenstein flat over $R$. Put $C=(R:T)$ and $A=R/C$.
For every $A$-module $L$ there are natural isomorphisms
\begin{equation}\label{eq:qcs-ext-shift}
 \Ext_R^{i+1}(L,R)\cong\Ext_A^i(L,A)\quad(i\ge0),
 \qquad \Hom_R(L,R)=0.
\end{equation}
The degree-one isomorphism sends $\phi:L\to A$ to the pullback of
$0\to R\to T\to A\to0$, where $T\to A$ sends $r+sx$ to $s+C$.
\end{lemma}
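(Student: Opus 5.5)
The plan is to apply the usual change-of-rings argument for the quotient $A=R/C$, which goes through once we know $\Ext_R^i(A,R)$. The isomorphism $T/R\cong A$ from Step~1 of Lemma~\ref{lem:qcs-quadratic-if} uses only $T=R+Rx$. Concretely, $r+sx\mapsto s+C$ is well defined: if $r+sx=0$ with $s\ne0$ then $x\in\Frac R$ is expressible, but more to the point, $sx\in R$ forces $s\in C$, because $sT=sR+sxR\subseteq R$. So we have the exact sequence $0\to R\to T\to A\to0$. The vanishing $\Hom_R(L,R)=0$ is immediate, since $L$ is killed by $C\ne0$ and $R$ is torsion-free; here $C\ne0$ because $T$ is module-finite inside $\Frac R$.

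\step{Step 1: Ext of $A$.} Apply $\Hom_R(-,R)$ to $0\to R\to T\to A\to0$. We have $\Hom_R(A,R)=0$. Next, $\Hom_R(T,R)=C$, and the restriction map $C\to\Hom_R(R,R)=R$ is the inclusion. By Lemma~\ref{lem:qcs-finite-dual}, applied to the finitely generated Gorenstein flat module $T$, we get $\Ext_R^i(T,R)=0$ for $i>0$, because $T^*=C$ and $T\cong T^{**}$, and $\Ext_R^{>0}(T,R)=\Ext_R^{>0}(C^*,R)=0$. The long exact sequence therefore gives $\Ext_R^1(A,R)\cong R/C=A$ and $\Ext_R^{i}(A,R)=0$ for $i\ge2$. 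The generator is the class of $0\to R\to T\to A\to0$ itself, corresponding to $1\in A$ via the connecting map.

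\step{Step 2: Rees-type spectral argument / dimension shift.} Consider the functor $L\mapsto \Ext_R^{i+1}(L,R)$ on $A$-modules. Take an injective resolution $R\to I^\bullet$ over $R$. The complex $\Hom_R(A,I^\bullet)$ consists of injective $A$-modules. By Step~1 its cohomology is $A$, concentrated in degree $1$. Hence $\Hom_R(A,I^\bullet)[1]$ is an injective $A$-resolution of $A$. Adjunction gives $\Hom_R(L,I^\bullet)=\Hom_A(L,\Hom_R(A,I^\bullet))$, and taking cohomology yields $\Ext_R^{i+1}(L,R)\cong\Ext_A^i(L,A)$, naturally in $L$. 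Equivalently, without complexes, one can use the Grothendieck spectral sequence $\Ext_A^p(L,\Ext_R^q(A,R))\Rightarrow\Ext_R^{p+q}(L,R)$, which collapses because only $q=1$ survives.

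\step{Step 3: the normalization.} In degree $i=0$ the isomorphism $\Hom_A(L,A)\to\Ext_R^1(L,R)$ is the composite $\Hom_A(L,A)=\Hom_R(L,\Ext^1_R(A,R))$. It is induced by naturality: $\phi$ is sent to $\phi^*$ of the generator class. By Step~1 that class is the extension $0\to R\to T\to A\to0$, and $\phi^*$ of an extension class is its pullback along $\phi$. The main care needed is in checking that the edge map of Step~2 really is $\phi\mapsto\phi^*[\xi]$, with no sign or twist. I would verify this directly from the Yoneda description. The identification of $\Ext_R^1(A,R)$ with $A$ sends $a$ to $a[\xi]$. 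Since $C$ kills $\Ext^1_R(A,R)$, this is $A$-linear, and for cyclic $L$ compatibility is clear. For general $L$ one checks it on the first term of the injective resolution, where a map $L\to\Hom_R(A,I^1)$ lifts $\phi$. I expect this bookkeeping to be the only real obstacle.
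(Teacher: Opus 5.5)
Your proposal is correct and follows the paper's proof: you show $\Ext_R^\ast(A,R)$ is $A$ concentrated in degree one with generator $[\eta]$, apply $\Hom_R(A,-)$ to an injective resolution of $R$, shift, use adjunction, and read off the normalization $\phi\mapsto\phi^*[\eta]$ in degree one. Two small repairs are needed. First, $\Hom_R(A,I^\bullet)[1]$ is literally an injective resolution of $A$ only after its injective degree-zero term is split off as a contractible summand, as the paper does, or automatically if you take $I^0=K$, since $\Hom_R(A,K)=0$; this splitting is also what guarantees the lift of $\phi$ into $1$-cocycles in your Step~3. Second, $\Ext_R^{i}(T,R)=0$ for $i\ge2$ should come from $\Ext_R^{i}(T,R)\cong\Ext_R^{i-1}(C,R)$ via $0\to C\to R^2\to T\to0$, because Lemma~\ref{lem:qcs-finite-dual} applied to $T$ gives vanishing for $T^*=C$, not for $C^*$.
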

\begin{proof}
Finite duality and the two-generator presentation give that $T,C$
are totally reflexive over $R$, and
$A\in\mathrm{FP}_\infty(R)$.  The sequence $0\to C\to R\to A\to0$
gives
\[
 \Hom_R(A,R)=0,\qquad \Ext_R^1(A,R)\cong T/R\cong A,\qquad
 \Ext_R^i(A,R)=0\quad(i>1).
\]
Choose the last identification so that the class of
\[
 \eta:\quad0\longrightarrow R\longrightarrow T
 \longrightarrow A\longrightarrow0
\]
corresponds to $1$.  This is possible because the connecting map
$R\to\Ext_R^1(A,R)$ for $\eta$ is surjective with kernel $C$.
Indeed, $T^*=C$ and $\Ext_R^1(T,R)=0$.

The vanishing of $\Hom_R(L,R)$ follows because $C\ne0$ annihilates
$L$ and $R$ is torsion-free. Take an injective resolution $E^\bullet$
of $R$. The complex $\Hom_R(A,E^\bullet)$ consists of injective
$A$-modules and has its only cohomology in degree one, equal to $A$.
Its degree-zero monomorphism splits, because its source is injective.
Delete the resulting contractible summand and shift: the remaining
complex is an injective resolution of $A$.  Applying $\Hom_A(L,-)$
proves \eqref{eq:qcs-ext-shift}.  In degree one the identification sends
$\phi:L\to A$ to the pullback $\phi^*\eta$.

\end{proof}

We now verify the natural evaluation map, in addition to Ext vanishing.
This is the step that permits a complete resolution over $R$.

\begin{lemma}\label{lem:qcs-syzygy-lift}
Let $R$ be a domain and let $R\subsetneq T=R+Rx\subseteq\Frac R$
be an overring that is Gorenstein flat over $R$. Set $C=(R:T)$ and
$A=R/C$, and suppose that $A$ is an IF-ring.  If $N$ is a finitely presented $A$-module and
\[
 0\longrightarrow H\longrightarrow F\xrightarrow{\pi}N
 \longrightarrow0
\]
is a presentation with $F$ finitely generated free over $R$, then $H$ is
totally reflexive over $R$.
\end{lemma}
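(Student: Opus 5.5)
The plan is to verify the definition of total reflexivity directly. I will show that $H\in\mathrm{FP}_\infty(R)$, that $\Ext_R^{>0}(H,R)=0$ and $\Ext_R^{>0}(H^*,R)=0$, and that $H\to H^{**}$ is an isomorphism. Splicing a finite free resolution of $H$ with the dual of one for $H^*$ then gives the required totally acyclic complex, as in Step 3 of Lemma~\ref{lem:qcs-finite-dual}. Throughout, write $N^\vee=\Hom_A(N,A)$.

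\emph{Finiteness and vanishing.} By Lemma~\ref{lem:qcs-quadratic-if}, $C$ is totally reflexive, so $A=R/C\in\mathrm{FP}_\infty(R)$. Since $A$ is coherent, the finitely presented $A$-module $N$ has a resolution by finitely generated free $A$-modules. Each of these is in $\mathrm{FP}_\infty(R)$, so a standard horseshoe/dimension-shifting argument puts $N\in\mathrm{FP}_\infty(R)$, and hence $H\in\mathrm{FP}_\infty(R)$. By \eqref{eq:qcs-ext-shift}, for $i\ge1$,
\[
\Ext_R^i(H,R)\cong\Ext_R^{i+1}(N,R)\cong\Ext_A^i(N,A),
\]
and this vanishes. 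Indeed, $A$ is self-FP-injective and coherent, and $N$ is of type $\mathrm{FP}_\infty$ over $A$, so dimension shifting along syzygies, all finitely presented by coherence, kills every higher Ext. Dualizing the presentation and using $\Hom_R(N,R)=0$ gives
\[
0\to F^*\to H^*\to \Ext^1_R(N,R)\cong N^\vee\to0 .
\]
Here $N^\vee$ is a finitely presented $A$-module by coherence. Since $F^*$ is free, $\Ext^i_R(H^*,R)\cong\Ext^{i+1}_R(N^\vee,R)\cong\Ext^i_A(N^\vee,A)=0$ for $i\ge1$, by the same argument.

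\emph{Biduality.} Dualizing the last sequence and using $\Hom_R(N^\vee,R)=0$ and $\Ext^1_R(H^*,R)=0$ yields
\[
0\to H^{**}\to F\to \Ext^1_R(N^\vee,R)\cong N^{\vee\vee}\to0 .
\]
I will compare this with $0\to H\to F\to N\to 0$ through the evaluation maps $H\to H^{**}$, $F\to F^{**}=F$ and $N\to N^{\vee\vee}$. By the five lemma it suffices to check two things: that the square on the right commutes, and that $N\to N^{\vee\vee}$ is an isomorphism. For the latter, take a presentation $A^m\to A^n\to N\to0$ and dualize to $0\to N^\vee\to A^n\to A^m\to X\to0$, where $X$ is finitely presented. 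Splitting this into short exact sequences and using $\Ext^1_A(X,A)=\Ext^2_A(X,A)=0$, the second dual returns the original presentation, so $N\cong N^{\vee\vee}$.

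\emph{Main obstacle.} The hard step is the commutativity of the square above, that is, that the map $F\to N^{\vee\vee}$ produced by the two connecting homomorphisms is the evaluation composed with $\pi$, and not some other isomorphism up to a twist. This is where the normalization in Lemma~\ref{lem:quotient-degree-one} enters: the degree-one identification sends $\phi$ to the pullback $\phi^*\eta$. I would compute the connecting maps explicitly by Yoneda pullback of $\eta$, element by element on a basis of $F$. The expected outcome is that both composites send $f\in F$ to the functional $\phi\mapsto\phi(\pi f)$, up to a global sign that does not affect bijectivity.
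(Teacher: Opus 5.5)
Your outline follows the paper's route: finiteness over $R$, vanishing of $\Ext_R^{>0}(H,R)$ via \eqref{eq:qcs-ext-shift}, the dual sequence \eqref{eq:qcs-first-dual}, identification of the bidual connecting map through the normalization of $\eta$, and splicing. However, your degree-one step for $H^*$ is wrong. In $0\to F^*\to H^*\to N^\vee\to0$ the free module is the \emph{submodule}, not the middle term. Dimension shifting therefore gives $\Ext_R^i(H^*,R)\cong\Ext_R^i(N^\vee,R)\cong\Ext_A^{i-1}(N^\vee,A)$ only for $i\ge2$, and these groups do vanish. It does not give $\Ext_R^{i+1}(N^\vee,R)$. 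In degree one the long exact sequence reads
\[
 0\to H^{**}\to F\xrightarrow{\ \beta\ }\Ext_R^1(N^\vee,R)\cong N^{\vee\vee}\to\Ext_R^1(H^*,R)\to0,
\]
so $\Ext_R^1(H^*,R)=\operatorname{coker}\beta$, which has nothing to do with $\Ext_A^1(N^\vee,A)$. The short exact bidual sequence you feed into the five lemma is therefore not available a priori. Nothing formal guarantees that $\beta$ is surjective. Without that, neither $\Ext_R^1(H^*,R)=0$, which total reflexivity requires, nor $H\cong H^{**}$ follows.

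So the whole content of the lemma sits in the step you leave as an ``expected outcome''. Once $\beta=\mathrm{ev}_N\circ\pi$ is proved, the conclusion is immediate and no five lemma is needed. Then $\beta$ is onto because $\pi$ is onto and $\mathrm{ev}_N$ is bijective; your argument for $N\cong N^{\vee\vee}$ is fine. This gives $\Ext_R^1(H^*,R)=0$ and $H^{**}=\ker\beta=H$.

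The identification must still actually be carried out. The paper does it as follows. Lift $\phi\pi$ through $T\to A$ to $\widetilde\phi:F\to T$; its restriction to $H$ lies in $R$ and represents $\phi$ in \eqref{eq:qcs-first-dual}. Since $\Hom_R(N,T)=0$, every $h\in H^*$ has a unique extension $F\to T$. Evaluating these extensions at $u\in F$ gives a map $H^*\to T$ extending $q\mapsto q(u)$ on $F^*$. This identifies the pushout of \eqref{eq:qcs-first-dual} along $q\mapsto q(u)$ with the pullback of $\eta$ along $\phi\mapsto\phi(\pi(u))$, which is $\beta(u)$.

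Two smaller points. First, Lemma~\ref{lem:qcs-quadratic-if} assumes $\OG$, which is not a hypothesis here, so you should get $A\in\mathrm{FP}_\infty(R)$ from Lemma~\ref{lem:quotient-degree-one} instead. Second, splicing also needs $H^*\in\mathrm{FP}_\infty(R)$. This follows from \eqref{eq:qcs-first-dual}: $N^\vee$ is finitely presented over the coherent ring $A$ and hence lies in $\mathrm{FP}_\infty(R)$.
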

\begin{proof}
\step{Step 1: finite resolutions and the dual presentation.}
Lemma~\ref{lem:quotient-degree-one} applies, and its proof shows
$A\in\mathrm{FP}_\infty(R)$.
Since $A$ is coherent and self-FP-injective, every finitely presented
$A$-module is totally reflexive; use \cite[Theorem~2.6]{MTY} and finite
free resolutions over the coherent ring.  Put $N^\vee=\Hom_A(N,A)$.
Thus $N,N^\vee\in\mathrm{FP}_\infty(A)$,
$N\cong N^{\vee\vee}$, and their positive Ext groups into $A$ vanish.
They also belong to $\mathrm{FP}_\infty(R)$.  To justify this last
assertion, all finite free $A$-modules belong to
$\mathrm{FP}_\infty(R)$; induction on $n$ along an $A$-resolution
by finitely generated free modules shows that every $\mathrm{FP}_\infty(A)$-module is
$\mathrm{FP}_n(R)$, for every $n$.

It follows that $H\in\mathrm{FP}_\infty(R)$ and
$\Ext_R^{>0}(H,R)=0$.  Dualizing the presentation of $N$ gives
\begin{equation}\label{eq:qcs-first-dual}
 0\longrightarrow F^*\longrightarrow H^*\longrightarrow N^\vee
 \longrightarrow0.
\end{equation}
In particular $H^*\in\mathrm{FP}_\infty(R)$.  Dualizing again and using
\eqref{eq:qcs-ext-shift} gives
\begin{equation}\label{eq:qcs-second-dual}
 0\longrightarrow H^{**}\longrightarrow F
 \xrightarrow{\beta}N^{\vee\vee}
 \longrightarrow\Ext_R^1(H^*,R)\longrightarrow0.
\end{equation}
\step{Step 2: identification of the biduality map.}We verify that $\beta$ is the composite of $\pi$ and evaluation
$N\to N^{\vee\vee}$, after making compatible choices of signs for the connecting maps.

For $\phi\in N^\vee$, lift $\phi\pi:F\to A$ through $T\to A$.
The restriction of a lift $\widetilde\phi:F\to T$ to $H$ lands in $R$
and represents $\phi$ in the quotient in
\eqref{eq:qcs-first-dual}.  Conversely, for each $h\in H^*$ choose
$\phi$ corresponding to its class.  Adjusting such a lift by a map
$F\to R$ makes its restriction equal to $h$.  This extension of $h$ to
$F\to T$ is unique, since $\Hom_R(N,T)=0$: $N$ is killed by the nonzero
ideal $C$, whereas $T$ is torsion-free.  For $u\in F$, evaluation of
these extensions at $u$ gives a map $H^*\to T$ extending
$F^*\to R$, $q\mapsto q(u)$.  Pushing out
\eqref{eq:qcs-first-dual} by this latter map identifies its extension
class with the pullback of $\eta$ along
\[
 N^\vee\longrightarrow A,\qquad
 \phi\longmapsto\phi(\pi(u)).
\]
This is precisely the connecting map defining $\beta(u)$.  Hence
$\beta(u)(\phi)=\phi(\pi(u))$, as claimed.

\step{Step 3: the complete resolution.}Since evaluation $N\to N^{\vee\vee}$ is an isomorphism,
\eqref{eq:qcs-second-dual} identifies $H^{**}$ with $H$ through its
natural map and gives $\Ext_R^1(H^*,R)=0$.  For $i>1$,
\eqref{eq:qcs-first-dual} and \eqref{eq:qcs-ext-shift} give
$\Ext_R^i(H^*,R)=0$.  Thus $H$ and $H^*$ are
$\mathrm{FP}_\infty$, $H\cong H^{**}$, and their positive Ext groups
into $R$ vanish.  Splice a resolution by finitely generated free modules of $H$ with the
dual of one of $H^*$ to obtain the required totally acyclic complex.
Applying $\Hom_R(-,E)$ to its exact dual for an injective $E$
shows that the complex is a complete flat resolution as well.
\end{proof}

\subsection{Modules annihilated by conductor powers}

The syzygy construction initially concerns finitely presented quotient
modules. Filtered-colimit and extension closure extend its conclusion
to all quotient modules and then to modules annihilated elementwise by
conductor powers, always over the original ring $R$.

\begin{theorem}\label{thm:qcs-return}
Let $R$ be a domain, and let $R\subsetneq T=R+Rx\subseteq\Frac R$
be an overring that is Gorenstein flat as an $R$-module,
put $C=(R:T)$, and suppose that $R/C$ is an IF-ring.  Then
\[
 \Gfd_R N\le1\qquad\text{for every }R/C\text{-module }N.
\]
More generally the same bound holds for every $R$-module $N$ such that
each element of $N$ is annihilated by some power of $C$.
Consequently, if $R$ satisfies $\OG$, these assertions apply to every
proper two-generated module overring.  For every ideal $J$ of $R$ they give
\[
 C\subseteq\sqrt J\quad\Longrightarrow\quad J\in\GF(R).
\]
\end{theorem}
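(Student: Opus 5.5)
We first treat finitely presented $A$-modules, where $A=R/C$, and then pass to all $A$-modules and to conductor-power torsion modules by the closure properties in Lemma~\ref{lem:qcs-done}.

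\step{Finitely presented $A$-modules.}
Let $N$ be a finitely presented $A$-module. Since $A\in\mathrm{FP}_\infty(R)$ (Lemma~\ref{lem:quotient-degree-one}), $N$ is finitely generated over $R$. Choose a presentation $0\to H\to F\to N\to0$ with $F$ finitely generated free over $R$. By Lemma~\ref{lem:qcs-syzygy-lift}, $H$ is totally reflexive, hence Gorenstein flat. Therefore $\Gfd_R N\le1$.

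\step{All $A$-modules.}
An arbitrary $A$-module is a filtered colimit of finitely presented $A$-modules. These colimits, computed over $A$, are the same as over $R$. Closure of $\mathcal D_1(R)$ under filtered colimits (Lemma~\ref{lem:qcs-done}) then gives $\Gfd_RN\le1$ for every $A$-module $N$.

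\step{Modules annihilated elementwise by powers of $C$.}
First suppose $C^qN=0$ for a fixed $q$. The filtration
\[
 N\supseteq CN\supseteq\cdots\supseteq C^qN=0
\]
has successive quotients that are $A$-modules. Extension closure and induction on $q$ give $N\in\mathcal D_1(R)$.

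In general, $N$ is the filtered union of the submodules $N_q=\{n:C^qn=0\}$. Each $N_q$ is handled by the bounded case, so colimit closure applies again. This union step is why elementwise annihilation suffices.

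\step{The $\OG$ case.}
If $R$ satisfies $\OG$ and $T=R+Rx$ is a proper overring, then $T$ is Gorenstein flat by hypothesis, and $R/C$ is IF by Lemma~\ref{lem:qcs-quadratic-if}. So the previous conclusions apply.

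\step{Ideals $J$ with $C\subseteq\sqrt J$.}
By Lemma~\ref{lem:qcs-done}, it suffices to show $R/J\in\mathcal D_1(R)$. The case $J=0$ is excluded: $C\ne0$, being the dual of the finitely generated torsion-free module $T$, while $\sqrt0=0$ in a domain. For nonzero $J$, every element of $C$ has a power in $J$. Because $C$ is finitely generated (Lemma~\ref{lem:qcs-finite-dual}), say by $m$ elements, some power $C^q$ lies in $J$: take $q$ of the order $m$ times the maximal exponent needed. Hence $R/J$ is killed by $C^q$. The previous step gives $\Gfd_R(R/J)\le1$, so $J$ is Gorenstein flat.

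\step{Main obstacle.}
The only substantive input is Lemma~\ref{lem:qcs-syzygy-lift}; everything after it is formal closure. The delicate points are checking two things:
\begin{enumerate}
\item filtered colimits of $A$-modules and extensions stay inside $\mathcal D_1(R)$ with the dimension taken over $R$, not $A$;
\item the finite generation of $C$ is genuinely needed to pass from $C\subseteq\sqrt J$ to $C^q\subseteq J$.
\end{enumerate}
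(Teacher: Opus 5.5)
Your proof is correct and follows the paper's own argument step for step. Like the paper, you use Lemma~\ref{lem:qcs-syzygy-lift} for finitely presented $R/C$-modules, then filtered colimits and the $C$-adic filtration via Lemma~\ref{lem:qcs-done}, then Lemma~\ref{lem:qcs-quadratic-if} under $\OG$, and finally finite generation of $C$ to obtain $C^q\subseteq J$; your extra remark that $J=0$ cannot satisfy $C\subseteq\sqrt J$ is harmless, since $0$ is trivially Gorenstein flat anyway.
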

\begin{proof}
The finitely presented $R/C$-modules have the asserted bound by
Lemma~\ref{lem:qcs-syzygy-lift}.  Every $R/C$-module is a filtered colimit
of finitely presented modules, so Lemma~\ref{lem:qcs-done} gives the bound
for all of them, over $R$.

If $C^nN=0$, the finite filtration by $C^jN$ has factors annihilated by
$C$; extension closure gives $N\in\mathcal D_1(R)$.  In the elementwise
case take the increasing union of $\{y\in N:C^ny=0\}$ and use filtered
colimits.  Under $\OG$, Lemma~\ref{lem:qcs-quadratic-if} supplies the IF
hypothesis.  Finally $C$ is finitely generated, so $C\subseteq\sqrt J$
implies $C^n\subseteq J$ for some $n$.  Apply the bound to $R/J$ and then
Lemma~\ref{lem:qcs-done}.
\end{proof}

\section{Finite patching and the overring characterization}
\label{sec:cp-patching}

We now combine the open neighborhoods supplied by normal localizations
with the closed conductor sets supplied by nonnormal localizations.
Constructible compactness makes the data finite; a bounded localization
complex then gives the required dimension bound. All module dimensions
in this section are computed over the fixed base ring $R$.

\subsection{A finite localization complex}

Products of conductor ideals preserve the torsion bound that will be
needed for the cohomology of a localization complex.

\begin{lemma}\label{lem:cp-product-torsion}
Let $R$ be a commutative ring and let $C_1,\ldots,C_s$ be ideals such that
every $R/C_j$-module belongs to $\mathcal D_1(R)$.  Put
$P=C_1\cdots C_s$.  Then every $R$-module whose elements are annihilated
by powers of $P$ belongs to $\mathcal D_1(R)$.
For $s=0$, use $P=R$; the assertion then concerns only the zero module.
\end{lemma}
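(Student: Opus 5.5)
Fix the hypothesis that every $R/C_j$-module lies in $\mathcal D_1(R)$. I would first treat modules $N$ with $PN=0$, then pass to powers of $P$, and finally to the elementwise condition. The last two reductions copy the proof of Theorem~\ref{thm:qcs-return}. If $P^nN=0$, the filtration $N\supseteq PN\supseteq\cdots\supseteq P^nN=0$ has factors killed by $P$, and extension closure in Lemma~\ref{lem:qcs-done} applies. In the elementwise case, $N$ is the increasing union of the submodules $\{y\in N:P^ny=0\}$, and filtered-colimit closure applies. The case $s=0$ is trivial: $P=R$, so the only such module is $0\in\mathcal D_1(R)$.

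For a module $N$ with $PN=C_1\cdots C_sN=0$, I would filter by partial products. Put $P_k=C_1\cdots C_k$, with $P_0=R$, and consider
\[
 N\supseteq P_1N\supseteq P_2N\supseteq\cdots\supseteq P_sN=0 .
\]
Each factor $P_{k-1}N/P_kN$ is annihilated by $C_k$, since $C_kP_{k-1}=P_k$. So each factor is an $R/C_k$-module and belongs to $\mathcal D_1(R)$ by hypothesis. Iterated extension closure in Lemma~\ref{lem:qcs-done} then gives $N\in\mathcal D_1(R)$.

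This extends to $P^nN=0$ in the same way. Since $P^n=C_1^n\cdots C_s^n$, one can refine the filtration so that each successive factor is killed by a single $C_j$. Alternatively, first filter by powers of $P$ to obtain factors killed by $P$, and then use the previous paragraph. The second route needs no refinement.

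I expect no serious obstacle. The only point to check is that the filtration factors really are annihilated by one $C_j$, which is exactly where ideal commutativity enters. The argument needs only the extension and filtered-colimit closure of $\mathcal D_1(R)$, which holds over an arbitrary commutative ring by Lemma~\ref{lem:qcs-done}. This matches the lemma being stated for general $R$.
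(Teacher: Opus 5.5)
Your proof is correct and essentially the same as the paper's. The paper uses a single filtration obtained by multiplying successively by $C_1,\ldots,C_s$ and repeating this list $n$ times, which is your nested filtration (first by powers of $P$, then by partial products) written in one pass; it then finishes with the same union $N=\bigcup_n(0:_N P^n)$ and the filtered-colimit closure of Lemma~\ref{lem:qcs-done}.
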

\begin{proof}
Suppose first that $P^nN=0$ for a fixed $n$.  Starting with $N$, multiply
successively by $C_1,\ldots,C_s$, and repeat this list $n$ times.  This
gives a finite filtration ending in $P^nN=0$.  Each factor is annihilated
by one $C_j$, so extension closure puts $N$ in $\mathcal D_1(R)$.
For elementwise annihilation, write
$N=\bigcup_{n\ge1}(0:_N P^n)$ and use filtered-colimit closure from
Lemma~\ref{lem:qcs-done}.  The empty-product convention is immediate.
\end{proof}

\Needspace{8\baselineskip}
A descending kernel argument in a bounded complex preserves the
Gorenstein flat dimension bound.

\begin{lemma}\label{lem:cp-bounded-complex}
Let $R$ be a commutative ring and let
$X^0\to X^1\to\cdots\to X^r$ be a cochain complex with zero terms outside
$0,\ldots,r$.  Suppose that $X^i\in\mathcal D_1(R)$ for $i>0$, and that
$H^i(X^\bullet)\in\mathcal D_1(R)$ for every $i\ge0$.  Then
$X^0\in\mathcal D_1(R)$.
\end{lemma}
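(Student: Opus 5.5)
We argue by descending induction, working from the top degree of the complex. Write $Z^i=\ker(X^i\to X^{i+1})$ and $B^i=\operatorname{im}(X^{i-1}\to X^i)$, with $B^0=0$ and $Z^r=X^r$. There are short exact sequences
\[
 0\longrightarrow Z^i\longrightarrow X^i\longrightarrow B^{i+1}\longrightarrow0,
 \qquad
 0\longrightarrow B^i\longrightarrow Z^i\longrightarrow H^i(X^\bullet)\longrightarrow0 .
\]
We aim to prove by descending induction on $i$ that $B^i\in\mathcal D_1(R)$ and $Z^i\in\mathcal D_1(R)$ for all $i\ge1$. We then conclude at $i=0$, where $B^0=0$ gives $Z^0\cong H^0(X^\bullet)$.

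\emph{Base step.} At $i=r$ we have $Z^r=X^r\in\mathcal D_1(R)$. The second sequence then exhibits $B^r$ as the kernel of the epimorphism $Z^r\to H^r$ between members of $\mathcal D_1(R)$. So $B^r\in\mathcal D_1(R)$ by the kernel closure of Lemma~\ref{lem:qcs-done}.

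\emph{Inductive step.} Given $B^{i+1}\in\mathcal D_1(R)$ with $i\ge1$, the first sequence presents $Z^i$ as the kernel of the epimorphism $X^i\to B^{i+1}$, and both terms lie in $\mathcal D_1(R)$. Hence $Z^i\in\mathcal D_1(R)$. The second sequence then gives $B^i$ as the kernel of $Z^i\to H^i$, so $B^i\in\mathcal D_1(R)$.

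\emph{The degree-zero step.} This is the one that differs. Here $X^0$ is not assumed in $\mathcal D_1(R)$, and it is what we want to prove. The sequence $0\to Z^0\to X^0\to B^1\to0$ has $B^1\in\mathcal D_1(R)$ by the induction and $Z^0=H^0\in\mathcal D_1(R)$ by hypothesis. Extension closure from Lemma~\ref{lem:qcs-done} then gives $X^0\in\mathcal D_1(R)$.

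The main thing to check is that the closure properties really apply with dimension one rather than zero. Kernel closure holds because $\mathcal D_1(R)$ is a resolving-type class: for $0\to K\to M\to N\to0$ with $\Gfd M,\Gfd N\le1$, the dimension criterion of \cite[Theorem~2.11]{BennisGF} bounds $\Gfd K\le\max(\Gfd M,\Gfd N-1)\le1$. Extension closure is similarly the standard estimate $\Gfd M\le\max(\Gfd K,\Gfd N)$. Both facts were recorded in Lemma~\ref{lem:qcs-done}, so no further obstacle is expected beyond keeping the indices straight. The degenerate cases, such as $r=0$ or vanishing terms, are covered by the convention $\Gfd_R0=0$.
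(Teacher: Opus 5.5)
Your proposal is correct and follows the paper's proof essentially verbatim. Like the paper, you run a descending induction on $0\to Z^i\to X^i\to B^{i+1}\to0$ and $0\to B^i\to Z^i\to H^i(X^\bullet)\to0$, using kernel closure down to $B^1\in\mathcal D_1(R)$, and finish with extension closure on $0\to H^0(X^\bullet)\to X^0\to B^1\to0$; your treatment of $r=0$ via $B^1=0$ matches the paper's remark that this case is immediate.
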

\begin{proof}
The case $r=0$ is immediate.  Write
$Z^i=\ker d^i$ and $B^i=\operatorname{im}d^{i-1}$.
At the top, $Z^r=X^r$ and
$B^r=\ker(X^r\to H^r(X^\bullet))$ belong to $\mathcal D_1(R)$.
Descending induction uses
\[
 0\longrightarrow Z^i\longrightarrow X^i\longrightarrow B^{i+1}
 \longrightarrow0,\qquad
 0\longrightarrow B^i\longrightarrow Z^i\longrightarrow H^i(X^\bullet)
 \longrightarrow0
\]
for $i=r-1,\ldots,1$.  Kernel closure gives $B^1\in\mathcal D_1(R)$.
Finally $0\to H^0(X^\bullet)\to X^0\to B^1\to0$ and extension closure
prove the result.  All these closure properties are those of
Lemma~\ref{lem:qcs-done}.
\end{proof}

The following patching statement allows closed conductor pieces as well
as ordinary basic-open pieces.  Its hypotheses are bounds over $R$, not
bounds computed over different local rings.

\begin{proposition}\label{prop:cp-finite-patching}
Let $R$ be a commutative ring, let $I$ be an ideal, and put $B=R/I$.
Suppose that $f_1,\ldots,f_r\in R$ and finitely generated ideals
$C_1,\ldots,C_s$ satisfy the following conditions:
\begin{enumerate}
\item $B_{f_i}\in\mathcal D_1(R)$ for every $i$;
\item every $R/C_j$-module belongs to $\mathcal D_1(R)$ for every $j$;
\item the prime ideals containing $I$ are covered as follows:
\[
 V(I)\subseteq\bigcup_{i=1}^r D(f_i)\ \cup\
                 \bigcup_{j=1}^s V(C_j).
\]
\end{enumerate}
Then $B\in\mathcal D_1(R)$.  Either indexing set is allowed to be empty.
\end{proposition}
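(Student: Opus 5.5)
We use the Čech (extended alternating) complex of $B$ with respect to $f_1,\ldots,f_r$, shifted so that $B$ sits in degree $0$:
\[
 0\to B\to\prod_i B_{f_i}\to\prod_{i<i'}B_{f_if_{i'}}\to\cdots\to B_{f_1\cdots f_r}\to0 .
\]
Write $X^0=B$ and, for $k\ge1$, let $X^k$ be the direct sum of the localizations $B_{f_{i_1}\cdots f_{i_k}}$ over $i_1<\dots<i_k$. We then apply Lemma~\ref{lem:cp-bounded-complex}. It has two hypotheses.

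\textbf{The terms $X^k$ with $k\ge1$.} Each summand $B_{f_{i_1}\cdots f_{i_k}}$ is the localization $(B_{f_{i_1}})_{f_{i_2}\cdots f_{i_k}}$. Since $B_{f_{i_1}}\in\mathcal D_1(R)$ by (i), Lemma~\ref{lem:qcs-done} puts each summand in $\mathcal D_1(R)$. Finite direct sums are handled by extension closure, so $X^k\in\mathcal D_1(R)$ for all $k\ge1$. If $r=0$, the complex is just $B$ in degree $0$ and this hypothesis is vacuous.

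\textbf{The cohomology.} Put $\mathfrak a=(f_1,\ldots,f_r)$. By the standard Čech computation, every element of $H^k(X^\bullet)$ is annihilated by a power of $\mathfrak a$; this holds for all $k$, including $H^0=\ker(B\to\prod B_{f_i})$. It also holds for $r=0$, where $H^0=B$ and $\mathfrak a=0$ (with the convention $0^0=R$ not needed). In addition, each $H^k$ is a $B$-module, hence killed by $I$.

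Let $P=C_1\cdots C_s$. Hypothesis (iii) says $V(I+\mathfrak a)\subseteq\bigcup_j V(C_j)=V(P)$. Since $P$ is finitely generated, this gives $P^m\subseteq I+\mathfrak a$ for some $m$.

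Now take $h\in H^k$ with $\mathfrak a^n h=0$ and $Ih=0$. Then $(I+\mathfrak a)^n h=0$, because $(I+\mathfrak a)^n\subseteq I+\mathfrak a^n$. It follows that $P^{mn}h=0$. So $H^k$ is annihilated elementwise by powers of $P$, and Lemma~\ref{lem:cp-product-torsion} together with (ii) gives $H^k\in\mathcal D_1(R)$. When $s=0$, we have $P=R$, so $I+\mathfrak a=R$, $B$ is covered by the $D(f_i)$, and all $H^k$ vanish; this is consistent with the empty-product convention.

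\textbf{Conclusion and the main obstacle.} Lemma~\ref{lem:cp-bounded-complex} now yields $X^0=B\in\mathcal D_1(R)$. The step needing the most care is the claim that Čech cohomology is $\mathfrak a$-power torsion elementwise for an arbitrary module over an arbitrary ring, with no noetherian hypothesis. We would prove it as follows. Localizing the complex at any $f_i$ makes it contractible, because the augmented Čech complex becomes null-homotopic once one $f_i$ is a unit. Since localization is exact, $H^k(X^\bullet)_{f_i}=0$ for each $i$. Hence every element of $H^k$ is killed by some power of each $f_i$, and therefore by a power of $\mathfrak a$, because $\mathfrak a$ is finitely generated.
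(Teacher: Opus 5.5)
Your proposal is correct and follows the paper's proof essentially step for step. The shared ingredients are: the augmented \v{C}ech complex on $f_1,\ldots,f_r$; positive-degree terms in $\mathcal D_1(R)$ by localization closure; cohomology killed by $I$ and elementwise by powers of $(f_1,\ldots,f_r)$, via contractibility after inverting each $f_i$; the containment $P^e\subseteq I+(f_1,\ldots,f_r)$ from the cover; and then Lemma~\ref{lem:cp-product-torsion} and Lemma~\ref{lem:cp-bounded-complex}. The only cosmetic difference is that you absorb the case $r=0$ into the complex, whereas the paper treats it separately via $P^eB=0$.
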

\begin{proof}
\step{Step 1: the finite annihilator bound.}Put $J=(f_1,\ldots,f_r)$ and $P=C_1\cdots C_s$, with $P=R$ if $s=0$.
The covering gives
\[
 V(I+J)\subseteq V(P),\qquad P\subseteq\sqrt{I+J}.
\]
The ideal $P$ is finitely generated, so some $e\ge1$ satisfies
\begin{equation}\label{eq:cp-power-containment}
 P^e\subseteq I+J.
\end{equation}
Every module annihilated by $I$ and elementwise annihilated by powers
of $J$ is therefore elementwise annihilated by powers of $P$.
It belongs to $\mathcal D_1(R)$ by Lemma~\ref{lem:cp-product-torsion}.

\step{Step 2: the localization complex and its cohomology.}If $r=0$, \eqref{eq:cp-power-containment} gives $P^eB=0$, proving the
result.  Suppose $r\ge1$.  Form the finite augmented localization complex
\begin{equation}\label{eq:cp-cech}
 X^\bullet
 = B\otimes_R\bigotimes_{i=1}^r[\,R\longrightarrow R_{f_i}\,],
\end{equation}
where each two-term factor has degrees zero and one, with the usual
signs on the tensor product differential.  Thus
\[
 X^0=B,\qquad
 X^q=\bigoplus_{1\le i_1<\cdots<i_q\le r}
        B_{f_{i_1}\cdots f_{i_q}}\quad(1\le q\le r).
\]
For $q>0$, every summand is a localization of one of the modules
$B_{f_i}$ in the hypothesis, so $X^q\in\mathcal D_1(R)$ by
Lemma~\ref{lem:qcs-done}.

After localizing \eqref{eq:cp-cech} at $f_i$, its $i$th two-term factor
is the identity complex $[R_{f_i}\xrightarrow{1}R_{f_i}]$.
Consequently $X^\bullet_{f_i}$ is contractible, and
$H^q(X^\bullet)_{f_i}=0$ for every $i,q$.  For any cohomology element
$h$, choose $n_i\ge1$ with $f_i^{n_i}h=0$.
Then $J^Nh=0$ for $N=1+\sum_i(n_i-1)$, by expansion of products of the
generators of $J$.  Also $Ih=0$, since the complex consists of
$B$-modules.  Equation~\eqref{eq:cp-power-containment} gives
$P^{eN}h=0$.  Hence all $H^q(X^\bullet)$ belong to $\mathcal D_1(R)$.
\step{Step 3: descending through the bounded complex.}
Lemma~\ref{lem:cp-bounded-complex} gives $X^0=B\in\mathcal D_1(R)$.
This uses only extensions and kernels of epimorphisms between modules
in $\mathcal D_1(R)$, not unrestricted closure under cokernels.
The argument does not identify the displayed cohomology with derived
local cohomology and imposes no weak-proregularity condition.
\end{proof}

\subsection{Constructible compactness and the global characterization}

The required finite covering is obtained in the constructible topology.
Recall that this topology on $\Spec R$ is generated by the sets $D(f)$
and $V(f)$, both of which are open and closed in it.  The spectrum with
this topology is compact; see \cite[Tag~0901]{Stacks}.
A closed subset $V(I)$ is also compact in this topology, even when $I$
is not finitely generated.  Finiteness of the ideals $C_j$ makes $V(C_j)$
constructibly open, which is the essential distinction from an arbitrary
Zariski-closed covering.

\begin{lemma}\label{lem:cp-cover-selection}
Let $R$ be a commutative ring and $I$ an ideal.  Suppose that every
$\mathfrak p\in V(I)$ belongs to a member of a family of subsets of
$\Spec R$, each of the form $D(f)$ or $V(C)$ with $C$ finitely generated.
Then finitely many members of this family cover $V(I)$.
\end{lemma}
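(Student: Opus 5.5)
The plan is to run a standard compactness argument in the constructible topology on $\Spec R$. That topology is compact by \cite[Tag~0901]{Stacks}, so the work is to see that $V(I)$ is closed there and that each member of the family is open there.

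\step{Step 1: the members are constructibly open.} For $f\in R$, the set $D(f)$ is open in the constructible topology by definition of that topology. For a finitely generated ideal $C=(c_1,\ldots,c_m)$ we have
\[
 V(C)=V(c_1)\cap\cdots\cap V(c_m).
\]
Each $V(c_k)$ is open in the constructible topology, and a finite intersection of open sets is open, so $V(C)$ is constructibly open. This is exactly the point where finite generation of $C$ is used.

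\step{Step 2: $V(I)$ is constructibly compact.} We have $V(I)=\bigcap_{a\in I}V(a)$. Each $V(a)$ is closed in the constructible topology, since it is already Zariski closed and the constructible topology is finer. An arbitrary intersection of closed sets is closed, so $V(I)$ is a closed subset of a compact space. It is therefore compact, even when $I$ is not finitely generated.

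\step{Step 3: extracting a finite subcover.} By hypothesis, the given family is a cover of $V(I)$ by constructibly open sets. Compactness of $V(I)$ then yields finitely many members whose union contains $V(I)$.

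If $V(I)$ is empty, for instance when $I=R$, the empty subfamily already works. This is consistent with Proposition~\ref{prop:cp-finite-patching}, which allows either indexing set to be empty.

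I do not expect a real obstacle here. The only points to check are that the constructible topology is finer than the Zariski topology, so that Zariski-closed sets remain closed, and that intersections of the $V(c_k)$ are taken only finitely often, which is what finite generation of $C$ guarantees.
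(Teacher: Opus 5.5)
Your proof is correct and matches the paper's argument. Each member is constructibly open, where finite generation makes $V(C)=V(c_1)\cap\cdots\cap V(c_m)$ a finite intersection; $V(I)=\bigcap_{a\in I}V(a)$ is closed in the compact constructible topology on $\Spec R$; and compactness gives the finite subcover.
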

\begin{proof}
Each indicated set is open in the constructible topology.
The subset $V(I)=\bigcap_{a\in I}V(a)$ is closed in that compact
Hausdorff space.  Apply compactness of $V(I)$.
\end{proof}

\label{sec:cp-characterization}

We apply the finite patching criterion to every finitely generated ideal.
At a normal localization its quotient has a basic-open flat resolution.
At a nonnormal localization a quadratic conductor supplies a closed
piece.  These pieces need not be open in the Zariski topology.

\begin{proposition}\label{prop:cp-finite-certificate}
Let $R$ be a domain satisfying $\OG$, and let $I$ be a nonzero finitely
generated proper ideal.  There are finitely many elements
$f_1,\ldots,f_r$ and finitely many proper overrings
$T_j=R+Rz_j$ with conductors $C_j=(R:T_j)$ such that
\begin{enumerate}
\item $I_{f_i}=a_iR_{f_i}$ for some $0\ne a_i\in I$, for every $i$;
\item each $C_j$ is nonzero and finitely generated, and every
$R/C_j$-module has Gorenstein flat dimension at most one over $R$;
\item $V(I)\subseteq\bigcup_iD(f_i)\cup\bigcup_jV(C_j)$.
\end{enumerate}
In particular, for some $e\ge1$,
\[
 (C_1\cdots C_s)^e\subseteq I+(f_1,\ldots,f_r),
\]
using the same empty-product convention as above.  
\end{proposition}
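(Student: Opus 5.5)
We argue prime by prime over $V(I)$, producing for each $\mathfrak p\in V(I)$ either a basic open set $D(f)\ni\mathfrak p$ on which $I$ becomes principal, or a constructibly open set $V(C)\ni\mathfrak p$ with $C$ a quadratic conductor. Then we apply Lemma~\ref{lem:cp-cover-selection} to pass to finitely many, and read off the power containment exactly as in Step~1 of Proposition~\ref{prop:cp-finite-patching}.

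\emph{Nonnormal case.} If $R_{\mathfrak p}$ is not integrally closed, Theorem~\ref{thm:qcs-quadratic-witness} gives $z$ with $z^2=u+vz$ over $R$ and $z\notin R_{\mathfrak p}$. It also gives a proper module-finite overring $T=R+Rz$ whose conductor $C=(R:T)$ is nonzero, finitely generated, and contained in $\mathfrak p$. Thus $\mathfrak p\in V(C)$. By Theorem~\ref{thm:qcs-return}, using $\OG$ and Lemma~\ref{lem:qcs-quadratic-if}, every $R/C$-module lies in $\mathcal D_1(R)$, which is condition (ii).

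\emph{Normal case.} If $R_{\mathfrak p}$ is integrally closed, it satisfies $\OG$ because $\OG$ localizes. Lemma~\ref{lem:qcs-normal} then makes $R_{\mathfrak p}$ Pr\"ufer, hence a valuation domain. Write $I=(b_1,\dots,b_m)$. Then $IR_{\mathfrak p}=b_kR_{\mathfrak p}$ for a generator $b_k$ minimal in the value group, and $b_k\neq0$ since $I\ne0$. Set $a=b_k$, so each $b_l=a\cdot(c_l/s_l)$ with $c_l\in R$ and $s_l\notin\mathfrak p$. Put $f=\prod_l s_l\notin\mathfrak p$. Then $I_f=aR_f$ because $I$ is finitely generated, giving condition (i) with $\mathfrak p\in D(f)$.

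\emph{Finiteness.} Every $\mathfrak p\in V(I)$ now lies in some $D(f)$ with $I_f$ principal, or in some $V(C)$ with $C$ a finitely generated quadratic conductor. Lemma~\ref{lem:cp-cover-selection} selects finitely many of these sets covering $V(I)$, which gives (iii). For the final claim, set $J=(f_1,\dots,f_r)$ and $P=C_1\cdots C_s$. Any prime containing $I+J$ lies in $V(I)$ but in no $D(f_i)$, so it lies in some $V(C_j)$ and hence contains $P$. Therefore $P\subseteq\sqrt{I+J}$. Since $P$ is finitely generated, $P^e\subseteq I+J$ for some $e\ge1$; this still holds when $s=0$, where $P=R$ and $V(I+J)$ is empty.

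The main obstacle is the normal case. There we must pass from the local statement, that $R_{\mathfrak p}$ is a valuation ring, to a global single-element localization in which $I$ becomes principal and generated by an element of $I$. Finite generation of $I$ is what makes this possible, and it is the only delicate point. Everything else is bookkeeping with the earlier results.
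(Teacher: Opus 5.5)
Your proposal is correct and follows the paper's proof essentially step for step. It uses the same prime-by-prime dichotomy: a valuation localization via Lemma~\ref{lem:qcs-normal} giving $D(f)$, and a quadratic conductor via Theorem~\ref{thm:qcs-quadratic-witness}, Lemma~\ref{lem:qcs-quadratic-if} and Theorem~\ref{thm:qcs-return} giving $V(C)$, followed by the finite selection of Lemma~\ref{lem:cp-cover-selection} and the radical-containment argument; you simply spell out the denominator clearing and the power containment (including the $s=0$ case) in more detail than the paper does.
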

\begin{proof}
Fix $\mathfrak p\supseteq I$.  If $R_{\mathfrak p}$ is integrally closed,
then it is a valuation domain by Lemma~\ref{lem:qcs-normal}, since
$\OG$ localizes.  Some nonzero generator $a\in I$ generates
$I_{\mathfrak p}$.  Clearing the coefficient denominators of the finitely
many generators of $I$ yields $f\notin\mathfrak p$ with $I_f=aR_f$.
This supplies a set $D(f)$ containing $\mathfrak p$.

If $R_{\mathfrak p}$ is not integrally closed,
Theorem~\ref{thm:qcs-quadratic-witness} supplies a proper quadratic module
overring $T=R+Rz$ with a nonzero finitely generated conductor
$C\subseteq\mathfrak p$.  Lemma~\ref{lem:qcs-quadratic-if} and
Theorem~\ref{thm:qcs-return} give the asserted bound for all $R/C$-modules.
This supplies the set $V(C)$ containing $\mathfrak p$.

Doing this for every prime over $I$ gives a constructibly open covering.
Lemma~\ref{lem:cp-cover-selection} extracts a finite subcover.
The final containment follows as in
\eqref{eq:cp-power-containment}.  
\end{proof}

\Needspace{6\baselineskip}
The criterion now applies globally, with all dimensions computed over $R$.

\begin{theorem}\label{thm:cp-main}
Let $R$ be a commutative integral domain with quotient field $K$.
The following conditions are equivalent:
\begin{enumerate}
\item $R$ is a Gorenstein Pr\"ufer domain;
\item every overring $R\subseteq T\subseteq K$ is Gorenstein flat as an
$R$-module;
\item $R[z_1,\ldots,z_n]$ is Gorenstein flat as an $R$-module for every
finite set $z_1,\ldots,z_n\in K$.
\end{enumerate}
In particular, the overring hypothesis forces coherence and
$\operatorname{fd}_R E\le1$ for every injective $R$-module $E$.
\end{theorem}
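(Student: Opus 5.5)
The plan is to prove (i)$\Rightarrow$(ii)$\Rightarrow$(iii)$\Rightarrow$(i). The first implication is the cited result \cite[Corollary~2.4]{HLZ}: over a Gorenstein Pr\"ufer domain every torsion-free module is Gorenstein flat, and every overring is torsion-free. The implication (ii)$\Rightarrow$(iii) is trivial. The work is in (iii)$\Rightarrow$(i).

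By the established ideal criterion recalled in Section~\ref{sec:qcs-preliminaries}, it suffices to show that every finitely generated ideal $I$ is Gorenstein flat. That criterion also yields the final assertion: coherence and $\fd_RE\le1$ for injective $E$. By Lemma~\ref{lem:qcs-done}, Gorenstein flatness of $I$ is equivalent to $R/I\in\mathcal D_1(R)$. The cases $I=0$ and $I=R$ are trivial, so we assume $I$ is nonzero and proper.

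\textbf{Weakening $\OG$ to finite-type overrings.} The first step is to check that every earlier use of $\OG$ only invoked finitely generated $R$-subalgebras, or subalgebras of localizations that can be replaced by finite-type ones.
\begin{enumerate}
\item Lemma~\ref{lem:qcs-normal} uses only $R[x]$, $R[x^{-1}]$ and $R[x,x^{-1}]$.
\item Lemma~\ref{lem:qcs-quadratic-if} uses the module-finite rings $R+Ix$.
\item Lemma~\ref{lem:qcs-divisorial-maximal} and Theorem~\ref{thm:qcs-quadratic-witness} use $D[x]$, $D[x^2]$ and $D+Dy$ over a localization $D=R_{\mathfrak p}$.
\end{enumerate}
For the localized uses, I would verify that condition (iii) localizes. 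A finite-type $D$-subalgebra $D[z_1,\dots,z_n]$ is the localization $R[z_1,\dots,z_n]_{\mathfrak p}$. The latter is Gorenstein flat over $R$, and the localization remark in Section~\ref{sec:qcs-preliminaries} (injectives restrict, and complete flat resolutions localize) makes it Gorenstein flat over $R_{\mathfrak p}$. So (iii) passes to all localizations, and all earlier results hold with $\OG$ replaced by (iii). This bookkeeping is the main obstacle, because Proposition~\ref{prop:cp-finite-certificate} is stated under the full hypothesis $\OG$. I would go through its proof and confirm that only the cited lemmas, now valid under (iii), enter.

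\textbf{Patching.} Proposition~\ref{prop:cp-finite-certificate} supplies $f_1,\dots,f_r$ and conductors $C_1,\dots,C_s$. I would then verify the hypotheses of Proposition~\ref{prop:cp-finite-patching} with $B=R/I$:
\begin{enumerate}
\item Condition (ii) is part (ii) of the certificate, which comes from Theorem~\ref{thm:qcs-return}.
\item Condition (iii) is the covering in part (iii) of the certificate.
\item For condition (i), $I_{f_i}=a_iR_{f_i}$ with $a_i\neq0$. The sequence $0\to R_{f_i}\xrightarrow{a_i}R_{f_i}\to B_{f_i}\to0$ is exact since $R$ is a domain. Hence $B_{f_i}$ has flat dimension at most one over $R$, so $B_{f_i}\in\mathcal D_1(R)$.
\end{enumerate}
Proposition~\ref{prop:cp-finite-patching} then gives $R/I\in\mathcal D_1(R)$. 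Therefore $I\in\GF(R)$, and the ideal criterion finishes the proof.
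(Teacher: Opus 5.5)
Your proof is correct. Its core is the paper's own argument: the finite certificate of Proposition~\ref{prop:cp-finite-certificate} is fed into Proposition~\ref{prop:cp-finite-patching}, each $(R/I)_{f_i}$ has flat dimension at most one, and the ideal criterion finishes. The difference is in how condition (iii) enters.

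You prove (iii)$\Rightarrow$(i) directly by auditing every use of $\OG$, and your audit is accurate. Lemma~\ref{lem:qcs-normal} only needs $\Tor_1^R(E,R[x,x^{-1}])=0$. Lemmas~\ref{lem:qcs-divisorial-maximal} and~\ref{lem:qcs-quadratic-if} and Theorem~\ref{thm:qcs-quadratic-witness} apply Gorenstein flatness only to $D[x]$, $D[x^2]$, $D[y]$, $R+Rz$ and $R+Ix$. All of these are finitely generated algebras over $R$ or over a localization of $R$. Condition (iii) localizes because $R_{\mathfrak p}[z_1,\dots,z_n]=R[z_1,\dots,z_n]_{\mathfrak p}$, and localized complete flat resolutions keep their injective tensor tests.

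The paper avoids this bookkeeping entirely. Every overring is the directed union of its finitely generated $R$-subalgebras, and $\GF(R)$ is closed under filtered colimits. So (iii)$\Rightarrow$(ii) is one line, after which the patching argument runs under the full hypothesis $\OG$.

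Your route has one mild merit: it makes explicit that the proof only ever tests finite-type overrings. It does not reduce dependencies, however. The same filtered-colimit closure is already needed in Lemma~\ref{lem:qcs-done}, and hence in Theorem~\ref{thm:qcs-return} and in the patching step. So the step you identify as the main obstacle is unnecessary.
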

\begin{proof}
If $R$ is $G$-Pr\"ufer, every torsion-free $R$-module is Gorenstein flat
by \cite[Corollary~2.4]{HLZ}; this includes every overring.  Thus
(i) implies (ii), and (ii) implies (iii).  Every overring is the directed
union of its finitely generated $R$-subalgebras, so the direct-limit
closure in \cite{SS} proves (iii) implies
(ii).

Assume (ii).  It suffices to prove that every finitely generated ideal
of $R$ is Gorenstein flat.  The zero and unit ideals are immediate.
For a nonzero finitely generated proper ideal $I$, use the data in
Proposition~\ref{prop:cp-finite-certificate}.  Each $I_{f_i}=a_iR_{f_i}$
gives an exact sequence of $R$-modules
\[
 0\longrightarrow R_{f_i}\xrightarrow{a_i}R_{f_i}
 \longrightarrow(R/I)_{f_i}\longrightarrow0.
\]
Both copies of $R_{f_i}$ are flat over $R$, and the first map is injective
because $R$ is a domain and $a_i\ne0$.  Hence
$(R/I)_{f_i}\in\mathcal D_1(R)$.  The other hypotheses of
Proposition~\ref{prop:cp-finite-patching} are supplied by the same
finite certificate.  That proposition yields
$\Gfd_R(R/I)\le1$.  The syzygy criterion therefore makes $I$ Gorenstein
flat.

The ideal characterization in Section~\ref{sec:qcs-preliminaries}
now gives the $G$-Pr\"ufer property, including coherence and
$\fd_R E\le1$ for every injective $E$. This proves (ii) implies (i).
\end{proof}

The characterization recovers the classical conclusion under normality
and also gives complete resolutions for the finitely generated ideals.

\begin{corollary}\label{cor:cp-normal-conclusion}
An integrally closed domain whose overrings are Gorenstein flat is a
classical Pr\"ufer domain.  For an arbitrary domain satisfying the overring
condition, every finitely generated ideal has a complete resolution by
finitely generated projective modules, and every torsion-free module is
Gorenstein flat.
\end{corollary}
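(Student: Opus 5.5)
The corollary has three assertions, and each comes from Theorem~\ref{thm:cp-main} together with earlier lemmas. For the first, let $R$ be integrally closed and satisfy $\OG$. Lemma~\ref{lem:qcs-normal} applies directly: every nonzero two-generated ideal is invertible, so $R$ is Pr\"ufer. No localization or patching is needed. Alternatively, Theorem~\ref{thm:cp-main} gives coherence. Every localization $R_{\mathfrak p}$ is integrally closed and inherits $\OG$, hence is a valuation domain, which again gives the Pr\"ufer property.

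For the second assertion, let $R$ be an arbitrary domain satisfying $\OG$. By Theorem~\ref{thm:cp-main}, $R$ is Gorenstein Pr\"ufer, so it is coherent and every finitely generated ideal $I$ is Gorenstein flat. Coherence puts $I$ in $\mathrm{FP}_\infty(R)$: it is finitely presented, and its syzygies in a resolution by finitely generated free modules are again finitely presented. Then $I$ is finitely generated, Gorenstein flat and in $\mathrm{FP}_\infty(R)$, so the last clause of Lemma~\ref{lem:qcs-finite-dual} makes $I$ and $I^*$ totally reflexive. A totally acyclic complex of finitely generated free modules having $I$ as a cycle is the required complete resolution by finitely generated projective modules. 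By the discussion after Definition~\ref{def:overring-homological}, it is also a complete resolution in the projective sense.

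For the third assertion, \cite[Corollary~2.4]{HLZ} states that over a Gorenstein Pr\"ufer domain every torsion-free module is Gorenstein flat. Since Theorem~\ref{thm:cp-main} has established that $R$ is Gorenstein Pr\"ufer, this applies. A self-contained route is also available: a torsion-free module is a filtered colimit of finitely generated torsion-free modules. Each of these embeds in a finite free module with finitely presented cokernel $Q$, using coherence. Since $\fd_R E\le1$ for every injective $E$, we get $\Tor_1^R(E,Q)$-vanishing on the relevant syzygy. The cokernel criterion \cite[Theorem~2.3]{BennisGF} then gives Gorenstein flatness, and the direct-limit closure of \cite{SS} finishes.

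The only potentially delicate point is the passage from ``Gorenstein flat and finitely generated'' to ``totally reflexive''. This needs the $\mathrm{FP}_\infty$ hypothesis of Lemma~\ref{lem:qcs-finite-dual}, which coherence from Theorem~\ref{thm:cp-main} supplies. Everything else is a direct citation.
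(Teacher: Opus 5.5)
Your proof is correct and follows the paper's argument: Lemma~\ref{lem:qcs-normal} for the integrally closed case; coherence from Theorem~\ref{thm:cp-main}, which puts finitely generated ideals in $\mathrm{FP}_\infty(R)$ so that Lemma~\ref{lem:qcs-finite-dual} yields the complete resolutions; and \cite[Corollary~2.4]{HLZ} for torsion-free modules. Your optional ``self-contained route'' for the last assertion is not right as written: vanishing of $\Tor^R_{\ge1}(E,M)$ for injective $E$ does not by itself make the kernel $M$ Gorenstein flat, since the cokernel criterion concerns cokernels of embeddings of modules already known to be Gorenstein flat; what does work is that $\wGgldim(R)\le1$ and the embedding $M\hookrightarrow K\otimes_RM$ into a flat module make $M$ a Gorenstein flat first syzygy.
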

\begin{proof}
The normal assertion is Lemma~\ref{lem:qcs-normal}.
In general Theorem~\ref{thm:cp-main} gives coherence and Gorenstein
flatness of every finitely generated ideal.  Such ideals belong to
$\mathrm{FP}_\infty(R)$, so Lemma~\ref{lem:qcs-finite-dual} gives their
complete resolutions by finitely generated free modules.  The torsion-free assertion is
\cite[Corollary~2.4]{HLZ}.
\end{proof}

\subsection{Separating examples and concluding remarks}

The following examples distinguish the conclusion from classical
Pr\"ufer theory and show why algebra-finiteness cannot be replaced by
module-finiteness in the testing family.

A singular curve already separates Gorenstein flatness from flatness.

\begin{example}\label{ex:overring-cusp}
Let $k$ be a field, $R=k[[t^2,t^3]]$, and $V=k[[t]]$.
Then every overring of $R$ is Gorenstein flat over $R$, but $R$ is not
Pr\"ufer and the module-finite normalization $V=R+Rt$ is not flat over $R$.
\end{example}
\begin{proof}
The presentation $R\cong k[[u,v]]/(v^2-u^3)$ makes $R$ a
one-dimensional Noetherian Gorenstein local domain. Its torsion-free
modules are Gorenstein flat by the standard Gorenstein and domain
characterizations \cite{CFH,HLZ}. The element $t=t^3/t^2$ is integral
over $R$ but does not belong to $R$, so $R$ is not integrally closed
and hence is not Pr\"ufer. If $V$ were flat, its finite presentation
over the Noetherian local ring $R$ would make it free. Its rank is one,
whereas $V/(t^2,t^3)V=V/t^2V$ has $k$-dimension two. This is impossible.
\end{proof}

Even a regular normal domain may pass every module-finite overring
test without satisfying the theorem's hypotheses.

\begin{example}\label{ex:modulefinite-insufficient}
Let $k$ be a field and let $R=k[u,v]_{(u,v)}$.
Its only module-finite overring is $R$, but the finite-type overring
$T=R[v/u]$ satisfies $\fd_RT=\Gfd_RT=1$.
In particular, testing only module-finite overrings does not
characterize Gorenstein Pr\"ufer domains.
\end{example}
\begin{proof}
The ring $R$ is an integrally closed domain, so a module-finite overring,
being integral, must equal $R$. Gauss's lemma in the UFD $R$ gives
$T\cong R[Z]/(uZ-v)$ and an exact sequence of free $R$-modules
\[
 0\longrightarrow R[Z]\xrightarrow{\,uZ-v\,}R[Z]
 \longrightarrow T\longrightarrow0.
\]
Tensoring with the residue field $k$ makes the displayed differential
zero, so $\Tor_1^R(k,T)\cong k[Z]\ne0$ and $\fd_RT=1$.
A module of finite flat dimension has the same Gorenstein flat dimension
\cite[Theorem~2.2]{Bennis}. Thus $T$ is not Gorenstein flat over $R$.
It cannot be module-finite, since it is a proper overring of $R$.
\end{proof}

The finite-patching proposition separates the two inputs to the main
theorem: local ring theory supplies principal neighborhoods and finite
conductors, while the module category supplies a resolving class closed
under filtered colimits. The localization complex never changes the
ring over which the dimension is measured. Consequently the theorem
does not depend on a descent assertion for arbitrary Gorenstein flat
modules, on Noetherian induction, or on periodic resolutions. The
examples show that the resulting characterization retains genuinely
Gorenstein behavior and that module-finite tests alone do not suffice.

\bigskip

\paragraph*{\bf Competing interests:} The author declares that there are no competing interests.

\medskip

\paragraph*{\bf Availability of data and material:}
No datasets were generated or analysed during the current study.

\medskip

\paragraph*{\bf Declaration of generative AI and AI-assisted technologies in the manuscript preparation process:}  
During the preparation of this manuscript, the author used ChatGPT (OpenAI, GPT-6) for language editing, LaTeX formatting, literature exploration, and assistance in examining mathematical arguments and proofs. All AI-assisted content was independently verified by the author, who takes full responsibility for the accuracy, originality, and integrity of the manuscript.

\end{document}